\newtheorem{theorem}{Theorem}[section]
\newtheorem{corollary}[theorem]{Corollary}
\newtheorem{lemma}[theorem]{Lemma}
\newtheorem{proposition}[theorem]{Proposition}
\numberwithin{equation}{section}
\author{Omar Cabrera \quad and \quad Mónica Clapp\footnote{M. Clapp was partially supported by UNAM-DGAPA-PAPIIT grant IN100718 (Mexico).}}
\title{Multiple solutions to weakly coupled supercritical elliptic systems}
\date{\today}
\begin{document}
\maketitle

\begin{abstract}
We study a weakly coupled supercritical elliptic system of the form
\begin{equation*}
\begin{cases}
-\Delta u = |x_2|^\gamma \left(\mu_{1}|u|^{p-2}u+\lambda\alpha |u|^{\alpha-2}|v|^{\beta}u \right) & \text{in }\Omega,\\
-\Delta v = |x_2|^\gamma \left(\mu_{2}|v|^{p-2}v+\lambda\beta |u|^{\alpha}|v|^{\beta-2}v \right) & \text{in }\Omega,\\
u=v=0 & \text{on }\partial\Omega,
\end{cases}
\end{equation*}
where $\Omega$ is a bounded smooth domain in $\mathbb{R}^{N}$, $N\geq 3$, $\gamma\geq 0$, $\mu_{1},\mu_{2}>0$, $\lambda\in\mathbb{R}$, $\alpha, \beta>1$, $\alpha+\beta = p$, and $p\geq 2^{*}:=\frac{2N}{N-2}$.

We assume that $\Omega$ is invariant under the action of a group $G$ of linear isometries, $\mathbb{R}^{N}$ is the sum $F\oplus F^\perp$ of $G$-invariant linear subspaces, and  $x_2$ is the projection onto $F^\perp$ of the point $x\in\Omega$. 

Then, under some assumptions on $\Omega$ and $F$, we establish the existence of infinitely many fully nontrivial $G$-invariant solutions to this system for $p\geq 2^*$ up to some value which depends on the symmetries and on $\gamma$. Our results apply, in particular, to the system with pure power nonlinearity ($\gamma=0$), and yield new existence and multiplicity results for the supercritical Hénon-type equation
$$-\Delta w = |x_2|^\gamma \,|w|^{p-2}w \quad\text{in }\Omega, \qquad w=0 \quad\text{on }\partial\Omega.$$

\noindent\textbf{Keywords:} Weakly coupled elliptic system; bounded domain; supercritical nonlinearity; Hénon-type equation; phase separation.

\noindent\textbf{Mathematics Subject Classification:} 35J47, 35B33, 35B40, 35J50.
\end{abstract}

\section{Introduction}

We consider the weakly coupled elliptic system
\begin{equation} \label{eq:system}
\begin{cases}
-\Delta u = |x_2|^\gamma \left(\mu_{1}|u|^{p-2}u+\lambda\alpha |u|^{\alpha-2}|v|^{\beta}u \right) & \text{in }\Omega,\\
-\Delta v = |x_2|^\gamma \left(\mu_{2}|v|^{p-2}v+\lambda\beta |u|^{\alpha}|v|^{\beta-2}v \right) & \text{in }\Omega,\\
u=v=0 & \text{on }\partial\Omega,
\end{cases}
\end{equation}
where $\Omega$ is a bounded smooth domain in $\mathbb{R}^{N}$, $N\geq 3$, $\gamma\geq 0$, $\mu_{1},\mu_{2}>0$, $\lambda\in\mathbb{R}$, $\alpha, \beta>1$, $\alpha+\beta = p$, and $p\in (2,\infty)$. The space $\mathbb{R}^{N}$ is decomposed into a direct sum $\mathbb{R}^{N} = F \oplus F^\perp$, where $F^\perp$ is the orthogonal complement of $F$, and $x_2$ is the orthogonal projection onto $F^\perp$ of the point $x\in\Omega$.

Systems of this type arise as a model for various physical phenomena. In particular, the cubic system, where $N=3$, $p=4$, $\alpha=\beta$ and $\gamma=0$, appears in nonlinear optics and in the study of standing waves in a double mixture of Bose-Einstein condensates, and has received much attention in recent years. There is an extensive literature on subcritical systems with $p<2_N^*:=\frac{2N}{N-2}$ and $\gamma=0$. We refer to \cite{so} for a detailed account. 

When $\gamma=0$ and $p$ is the critical Sobolev exponent $2_N^*$, existence and multiplicity results, both in bounded domains and in $\mathbb{R}^N$, were recently obtained in \cite{cf,cp,glw,ppw,ps}. Critical systems of Brezis-Nirenberg type have been studied in \cite{cz1,cz2,llw,pt}.

Here we shall, mainly, focus our attention on the supercritical case $p>2_N^*$.

When $\lambda=0$ the system \eqref{eq:system} reduces to the problem 
\begin{equation} \label{eq:Henon}
\begin{cases}
-\Delta w = |x_2|^\gamma \,|w|^{p-2}w &\text{in }\Omega, \\
w=0 &\text{on }\partial\Omega.
\end{cases}
\end{equation}
Note that, if $w$ solves \eqref{eq:Henon}, then $(\mu_{1}^{1/(2-p)}w,0)$ and $(0,\mu_{2}^{1/(2-p)}w)$ solve the system \eqref{eq:system} for every $\lambda$. Solutions of this type are called \emph{semitrivial}. We are interested in \emph{fully nontrivial} solutions to \eqref{eq:system}, i.e., solutions where both components, $u$ and $v$, are nontrivial. A solution is said to be \emph{synchronized} if it is of the form $(sw,tw)$ with $s,t\in\mathbb{R}$, and it is called \emph{positive} if $u\geq0$ and $v\geq0$. The system \eqref{eq:system} is called \emph{cooperative} if $\lambda>0$ and \emph{competitive} if $\lambda<0$.

In the cooperative case, we make the following additional assumption:

\begin{itemize}
\item[$(A)$] If $\lambda>0$, then there exists $r\in(0,\infty)$ such that
$$\mu_1 r^{p-2} + \lambda\alpha r^{\alpha -2}-\lambda\beta r^\alpha -\mu_2=0.$$
\end{itemize}

We consider symmetric domains. Our setting is as follows.

Let $G$ be a closed subgroup of the group $O(N)$ of linear isometries of $\mathbb{R}^N$. We write $Gx:=\{gx:g\in G\}$ for the $G$-orbit of a point $x\in\mathbb{R}^{N}$. Recall that a subset $X$ of $\mathbb{R}^N$ is called $G$-invariant if $Gx\subset X$ for every $x\in X$ and a function $u:X\to \mathbb{R}$ is $G$-invariant if $u$ is constant on $Gx$ for every $x\in X$.

We assume that domain $\Omega$ and the linear subspace $F$ of $\mathbb{R}^N$ are $G$-invariant, and satisfy
\begin{itemize}
\item[$(F_1)$]$F\neq\mathbb{R}^N$ if $\gamma>0$,
\item[$(F_2)$]$\Omega_0:=\{x\in\Omega:\dim Gx=0\}\subset F$.
\end{itemize}
We are interested in finding $G$-invariant solutions $(u,v)$ to the system \eqref{eq:system}, i.e., both components $u$ and $v$ are $G$-invariant. We denote by
$$d:= \min \{\dim Gx:x\in\Omega\smallsetminus\Omega_0\}>0,$$
and, for $p\in [1,\infty)$, we set
$$\gamma_p:= p\left(\frac{N}{2}-\frac{N}{p}-1\right).$$
We write $2_k^*$ for the critical Sobolev exponent in dimension $k$, i.e., $2_k^*:=\frac{2k}{k-2}$ if $k>2$ and $2_k^*:=\infty$ if $k\leq 2$. Note that $\gamma_p\geq0$ if $p\geq 2^*_N$. We will prove the following result.

\begin{theorem} \label{thm:mainthm}
Assume $(A)$, $(F_1)$ and $(F_2)$, and let $p\in (2,2^*_{N-d})$. If $\Omega_0\neq\emptyset$ we assume further that $\gamma>\max\{\gamma_p,0\}$. Then, the system \eqref{eq:system} has infinitely many fully nontrivial $G$-invariant solutions, one of which is positive.
\end{theorem}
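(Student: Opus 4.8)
The plan is to use the symmetry to convert the supercritical problem into an effectively subcritical one and then run a constrained symmetric minimax argument. First I would let $H:=H^1_0(\Omega)^G$ be the closed subspace of $G$-invariant functions and consider on $H\times H$ the energy functional
$$J(u,v)=\tfrac12\int_\Omega\bigl(|\nabla u|^2+|\nabla v|^2\bigr)\,dx-\tfrac1p\int_\Omega|x_2|^\gamma\bigl(\mu_1|u|^p+\mu_2|v|^p\bigr)\,dx-\lambda\int_\Omega|x_2|^\gamma|u|^\alpha|v|^\beta\,dx,$$
whose critical points are exactly the $G$-invariant weak solutions of \eqref{eq:system}, by the principle of symmetric criticality. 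The decisive point is that on $G$-invariant functions the effective Sobolev exponent is dictated not by $N$ but by the reduced dimension $N-d$: the hypotheses $p<2^*_{N-d}$ and (when $\Omega_0\neq\emptyset$) $\gamma>\max\{\gamma_p,0\}$ are precisely what guarantee that the weighted embedding $H\hookrightarrow L^p(\Omega,|x_2|^\gamma\,dx)$ is \emph{compact}, the Hénon weight compensating for the degeneration near the low-dimensional orbit set $\Omega_0\subset F$. I would invoke this compactness (established earlier) to conclude that $J\in C^1$, that its nonlinear part is weakly sequentially continuous, and hence that $J$ satisfies the Palais–Smale condition on the $G$-invariant space.

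To capture \emph{fully nontrivial} solutions and discard the ever-present semitrivial ones $(w,0)$ and $(0,w)$, I would work on the Nehari-type set
$$\mathcal{N}:=\bigl\{(u,v)\in H\times H: u\neq0,\ v\neq0,\ \partial_uJ(u,v)u=0,\ \partial_vJ(u,v)v=0\bigr\}.$$
The central technical step is the analysis of the fiber maps $(t_1,t_2)\mapsto J(t_1u,t_2v)$ for fixed $u,v\neq0$: one shows that each has a unique critical point in $(0,\infty)^2$, a strict maximum, so that $\mathcal{N}$ is a $C^1$ manifold, is bounded away from the origin, and $J|_{\mathcal{N}}$ is coercive and bounded below; moreover the positive scaling makes the map $(u,v)\mapsto(t_1u,t_2v)$ a homeomorphism from the product $\mathcal{S}\times\mathcal{S}$ of unit spheres of $H$ onto $\mathcal{N}$. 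In the competitive case $\lambda<0$ this fiber analysis is direct; in the cooperative case $\lambda>0$ it is delicate, and this is exactly where hypothesis $(A)$ enters, ensuring that the fully nontrivial energy level stays strictly below the semitrivial ones so that $\mathcal{N}$ does not collapse onto its semitrivial walls. One then verifies that $\mathcal{N}$ is a natural constraint, so that critical points of $J|_{\mathcal{N}}$ are genuine fully nontrivial solutions of \eqref{eq:system}.

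For the multiplicity I would use that $J$ is even and that the free antipodal action $(u,v)\mapsto(-u,-v)$ leaves $\mathcal{N}$ invariant; under the homeomorphism above this corresponds to the diagonal antipodal action on $\mathcal{S}\times\mathcal{S}$, a space of infinite Krasnoselskii genus. The standard symmetric minimax over sets of genus at least $k$ then yields an unbounded increasing sequence of critical values $c_k\to\infty$ of $J|_{\mathcal{N}}$, each attained thanks to the Palais–Smale condition from the first step; this produces infinitely many fully nontrivial $G$-invariant solutions. For the positive solution I would instead minimize $J$ over the fully nontrivial functions in the positive cone, which is preserved by the fiber projection; compactness yields a minimizer, a maximum principle gives strict positivity of both components, and $(A)$ prevents the minimizer from degenerating to a semitrivial configuration in the cooperative regime.

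The step I expect to be the main obstacle is the fiber-map analysis underpinning $\mathcal{N}$ in the cooperative case: proving existence, uniqueness and smooth dependence of the scaling $(t_1,t_2)$ and quantitatively separating $\mathcal{N}$ from the semitrivial walls, so that minimizing and minimax sequences cannot leak into semitrivial limits. Once the symmetry-induced compactness is in hand, the remaining minimax, regularity and positivity steps are essentially standard.
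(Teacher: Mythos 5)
Your overall framework (symmetry-induced compact embedding, Nehari-type constraint, symmetric minimax) matches the paper for the competitive case, but two of your key steps do not go through as stated. First, the fiber-map analysis: for $\lambda<0$ the map $(t_1,t_2)\mapsto E(t_1u,t_2v)$ need \emph{not} have any critical point in $(0,\infty)^2$, let alone a unique strict maximum. The stationarity conditions read $\|u\|^2=\mu_1t_1^{p-2}|u|_{p,\gamma}^p+\lambda\alpha t_1^{\alpha-2}t_2^{\beta}\int_\Omega|x_2|^\gamma|u|^\alpha|v|^\beta$ (and symmetrically in $v$), and when the overlap integral is large relative to $|u|_{p,\gamma}^p$ and $|v|_{p,\gamma}^p$ (e.g.\ $u=v$ with $|\lambda|$ large) these have no positive solution. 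Hence $\mathcal{N}$ is \emph{not} homeomorphic to $\mathcal{S}\times\mathcal{S}$, and your genus argument collapses. The paper avoids this entirely: it never projects arbitrary pairs onto $\mathcal{N}^G$, but instead (i) proves $\mathcal{N}^G$ is a natural constraint by showing $\nabla f$ and $\nabla h$ are linearly independent via a direct determinant estimate (the delicate point, since $\alpha,\beta$ may exceed $2$), and (ii) produces compact symmetric subsets of $\mathcal{N}^G$ of arbitrarily large genus by using $2j$ functions with pairwise disjoint supports, for which the projection \emph{is} well defined because the coupling term vanishes. Szulkin's result only needs these sets to be nonempty, not a global homeomorphism.

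Second, your treatment of the cooperative case misreads hypothesis $(A)$ and would not work as described: for $\lambda>0$ the Nehari set $\mathcal{N}$ is generally not closed, not bounded away from the semitrivial configurations, and the fiber maps can have several or no interior critical points; $(A)$ is not an energy-separation condition. In the paper $(A)$ is an algebraic solvability condition: if $r>0$ solves $\mu_1r^{p-2}+\lambda\alpha r^{\alpha-2}-\lambda\beta r^{\alpha}-\mu_2=0$, then every nontrivial solution $w$ of the scalar H\'enon-type equation \eqref{eq:Henon} yields a synchronized fully nontrivial solution $(sw,tw)$ of the system. The cooperative case is therefore reduced to Theorem \ref{thm:equation} (infinitely many $G$-invariant solutions of the scalar equation), with no system-level variational argument at all. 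You would need either to adopt this reduction or to supply a genuinely new compactness/separation mechanism for the cooperative Nehari set; as written, that step fails.
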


Note that, as $d>0$, we have that $2^*_{N-d}>2^*_N$. For $\lambda>0$ the solutions given by Theorem \ref{thm:mainthm} are syncronized and infinitely many of them are sign-changing. In contrast, as shown in \cite[Proposition 2.3]{cp}, there are no syncronized solutions for $\lambda$ smaller than some number $\lambda_*<0$. Moreover, the positive solution given by Theorem \ref{thm:mainthm} has minimal energy among all fully nontrivial $G$-invariant solutions when $\lambda<0$.

We state some special cases of Theorem \ref{thm:mainthm}. Firstly, if $\Omega_0=\emptyset$ we may take $\gamma=0$, and our result reads as follows.

\begin{corollary} \label{cor:pure_system}
Assume $(A)$. If $\dim Gx\geq d>0$ for every $x\in\Omega$, then, for any $p\in [2^*_N,2^*_{N-d})$, the system
\begin{equation} \label{eq:pure_system}
\begin{cases}
-\Delta u = \mu_{1}|u|^{p-2}u+\lambda\alpha |u|^{\alpha-2}|v|^{\beta}u & \text{in }\Omega,\\
-\Delta v = \mu_{2}|v|^{p-2}v+\lambda\beta |u|^{\alpha}|v|^{\beta-2}v  & \text{in }\Omega,\\
u=v=0 & \text{on }\partial\Omega,
\end{cases}
\end{equation}
has infinitely many fully nontrivial $G$-invariant solutions, one of which is positive.
\end{corollary}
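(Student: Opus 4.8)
Formally the corollary is immediate from Theorem~\ref{thm:mainthm}, which is stated above: the hypothesis $\dim Gx\ge d>0$ for every $x\in\Omega$ says exactly that $\Omega_0=\{x\in\Omega:\dim Gx=0\}=\emptyset$, whence $(F_2)$ holds vacuously, $(F_1)$ is empty because we take $\gamma=0$, and $(A)$ is assumed; so for $p\in[2^*_N,2^*_{N-d})$ the theorem applies with $\gamma=0$ and there is nothing left to do. Since the real content lies in the proof of that theorem, the plan is to carry it out in this simplified setting. With $\gamma=0$ the weight $|x_2|^\gamma$ is identically $1$, the subspace $F$ drops out, and \eqref{eq:system} becomes the pure-power system \eqref{eq:pure_system}. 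I would work on the Hilbert space $H:=\{u\in H^1_0(\Omega):u\text{ is }G\text{-invariant}\}$ and the $C^1$, even, positively $p$-homogeneous functional
$$J(u,v)=\tfrac12\int_\Omega\bigl(|\nabla u|^2+|\nabla v|^2\bigr)-\int_\Omega\Bigl(\tfrac{\mu_1}{p}|u|^p+\tfrac{\mu_2}{p}|v|^p+\lambda|u|^\alpha|v|^\beta\Bigr)$$
on $\mathcal H:=H\times H$, whose $G$-invariant critical points are the $G$-invariant solutions of \eqref{eq:pure_system}.

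Everything hinges on one compactness statement, which I regard as the main obstacle: \emph{if $\dim Gx\ge d>0$ for all $x\in\Omega$ and $p<2^*_{N-d}$, then the embedding $H\hookrightarrow L^p(\Omega)$ is compact.} With $\gamma=0$ and $\Omega_0=\emptyset$ this is the purely symmetric compactness phenomenon, with no weight required. I would prove it by a concentration--compactness analysis of a bounded sequence $u_n\rightharpoonup u$ in $H$: strong $L^p$-convergence can fail only if the measures $|u_n|^p\,dx$ concentrate at some $x_0\in\overline\Omega$. By $G$-invariance any concentration point drags along its whole orbit $Gx_0$, which has dimension $\ge d$, so the sequence concentrates on a manifold of dimension $\ge d$; describing the $G$-action near $Gx_0$ by a slice/tubular neighbourhood reduces the local Sobolev balance to the $(N-d)$ transverse directions, where concentration forces $p\ge 2^*_{N-d}$. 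As $p<2^*_{N-d}$ no concentration can occur, and a partition-of-unity argument upgrades weak to strong convergence. Granting this, $J$ satisfies the Palais--Smale condition on $\mathcal H$.

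Existence and multiplicity then follow from standard symmetric variational methods, split by the sign of $\lambda$. For $\lambda>0$ I would use $(A)$: the compatibility conditions for a synchronized ansatz $(sw,tw)$ to solve \eqref{eq:pure_system} reduce, upon setting $r=s/t$, to precisely the equation in $(A)$, which therefore furnishes $s,t>0$ so that $(sw,tw)$ solves the system whenever $w$ solves the scalar equation $-\Delta w=|w|^{p-2}w$ in $\Omega$, $w=0$ on $\partial\Omega$. The even scalar functional for this equation satisfies $(PS)$ by the compactness lemma, so the symmetric mountain-pass theorem yields an unbounded sequence of critical values, hence infinitely many solutions $w$---one positive of least energy and infinitely many sign-changing---and thus infinitely many fully nontrivial synchronized $G$-invariant solutions $(sw,tw)$, one of them positive. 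For $\lambda\le0$ the system is decoupled ($\lambda=0$) or competitive ($\lambda<0$); here I would minimise $J$ over the fully nontrivial Nehari set $\mathcal N=\{(u,v):u\neq0,\ v\neq0,\ \partial_uJ(u,v)u=\partial_vJ(u,v)v=0\}$ to obtain, after passing to $(|u|,|v|)$ and invoking the maximum principle, a positive least-energy fully nontrivial solution, and apply a $\mathbb Z_2$-symmetric minimax (Krasnoselskii genus / pseudo-index) over $\mathcal N$ to produce an unbounded sequence of critical values and hence infinitely many distinct fully nontrivial $G$-invariant solutions, as in \cite{cp}.

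Beyond the compactness lemma, the delicate point---and the second obstacle---is full nontriviality in the competitive regime $\lambda<0$: one must rule out that minimising and minimax sequences on $\mathcal N$ collapse to semitrivial pairs, which requires quantitative control of the coupling term $\int|u|^\alpha|v|^\beta$ and a verification that the constrained critical points on $\mathcal N$ are genuine critical points of $J$. The case $\lambda>0$, by contrast, is comparatively routine once compactness is in hand, since it reduces entirely to the scalar problem.
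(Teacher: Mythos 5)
Your proposal is correct and follows the paper's route exactly: the corollary is indeed nothing more than Theorem~\ref{thm:mainthm} with $\gamma=0$, where $\dim Gx\ge d>0$ gives $\Omega_0=\emptyset$ so that $(F_1)$, $(F_2)$ and the extra condition on $\gamma$ are all vacuous, and the paper offers no further argument. Your subsequent sketch of the underlying machinery (Hebey--Vaugon compactness, Nehari manifold plus genus for $\lambda<0$, synchronized reduction to the scalar problem via $(A)$ for $\lambda>0$) also matches the paper's Sections 2--4, with the only cosmetic difference that you propose to reprove the symmetric compact embedding by concentration--compactness where the paper simply cites \cite{hv}.
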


For $p\in(2,2_N^*)$ this result is true without any symmetry assumption. For $p=2^*_N$ it was proved in \cite[Corollary 1.3]{cf}. 

Taking $\mu_1=\mu_2=1$ and $\lambda=0$ in Theorem \ref{thm:mainthm}, we get a multiplicity result for problem \eqref{eq:Henon}. We shall prove, in fact, the following improvement of it, that states the existence of infinitely many sign-changing solutions. 

\begin{theorem} \label{thm:equation}
Assume $(F_1)$ and $(F_2)$, and let $p\in (2,2^*_{N-d})$. If $\Omega_0\neq\emptyset$ we also assume that $\gamma>\max\{\gamma_p,0\}$. Then, the problem \eqref{eq:Henon} has a positive $G$-invariant solution which has least energy among all nontrivial $G$-invariant solutions, and infinitely many sign-changing $G$-invariant solutions.
\end{theorem}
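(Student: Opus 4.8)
The plan is to treat \eqref{eq:Henon} directly by variational methods on the space $H_0^1(\Omega)^G$ of $G$-invariant functions, using the symmetry to recover compactness in the supercritical range. The relevant energy is the even $C^1$-functional
$$J(w)=\frac12\int_\Omega|\nabla w|^2\,dx-\frac1p\int_\Omega|x_2|^\gamma|w|^p\,dx,\qquad w\in H_0^1(\Omega)^G,$$
whose nontrivial critical points are exactly the $G$-invariant solutions of \eqref{eq:Henon}. The analytic crux is the \emph{compact} embedding of $H_0^1(\Omega)^G$ into the weighted space $L^p(\Omega,|x_2|^\gamma\,dx)$ for every $p\in(2,2^*_{N-d})$ (and, when $\Omega_0\neq\emptyset$, for $\gamma>\max\{\gamma_p,0\}$): this is what makes the problem effectively subcritical within the $G$-invariant category, and it is the same compactness that underlies the system result, so I would establish it first and then reuse it here. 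Granting it, $J$ satisfies the Palais--Smale condition and has the mountain-pass geometry (note $p>2$ gives the superquadratic/Ambrosetti--Rabinowitz structure), so the remaining work is structural.

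First I would produce the positive least-energy solution by minimizing $J$ over the Nehari manifold $\mathcal N=\{w\in H_0^1(\Omega)^G\smallsetminus\{0\}:\langle J'(w),w\rangle=0\}$. Compactness yields a minimizer $w_0$ with $J(w_0)=c_0:=\inf_{\mathcal N}J>0$; replacing $w_0$ by $|w_0|$ keeps it on $\mathcal N$ without raising the energy, so $w_0$ may be taken nonnegative. Then $-\Delta w_0=|x_2|^\gamma w_0^{p-1}\ge0$, so $w_0$ is a nonnegative superharmonic function on the connected domain $\Omega$, and the strong maximum principle gives $w_0>0$. Since every nontrivial $G$-invariant solution lies on $\mathcal N$, the level $c_0$ is the least energy among all such solutions, which proves the first assertion.

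For the infinitely many sign-changing solutions I would combine the symmetric minimax principle with the geometry of the positive cone. Because $J$ is even and satisfies Palais--Smale, the symmetric mountain-pass theorem furnishes an unbounded sequence of critical values $c_1\le c_2\le\cdots\to\infty$, detected by the $\mathbb Z/2$-genus, and hence infinitely many pairs $\pm w_k$ of solutions. The point that needs care---and which I expect to be the main obstacle---is to guarantee that these solutions change sign rather than accumulating among functions of fixed sign, for which no a priori energy bound on constant-sign solutions is available. To force sign changes I would work with the closed cones $\pm\mathcal P$ of $G$-invariant functions of constant sign and exploit that suitable neighborhoods of $\mathcal P$ and of $-\mathcal P$ are positively invariant under the negative pseudo-gradient flow of $J$; deforming the symmetric minimax classes away from these neighborhoods leaves the genus unchanged while confining the resulting critical points to the sign-changing region, yielding an unbounded sequence of sign-changing critical values.

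Alternatively, and in the spirit of the phase-separation phenomenon, the sign-changing solutions could be obtained as limits, as $\lambda\to-\infty$, of the fully nontrivial $G$-invariant solutions of the competitive system \eqref{eq:system} with $\mu_1=\mu_2=1$ provided by Theorem~\ref{thm:mainthm}: the two components segregate, $w=u-v$ solves \eqref{eq:Henon} while changing sign, and the infinitely many energy levels of the system transfer to infinitely many sign-changing solutions. The chief difficulty is the same in either route---controlling the sign of the high-energy solutions---and is precisely where the cone invariance, respectively the segregation estimates, must be invoked.
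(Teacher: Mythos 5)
Your proposal follows essentially the same route as the paper: the compact embedding of Theorem \ref{thm:in} makes $J$ satisfy $(PS)_c$ on the $G$-invariant Nehari manifold, minimization there gives the positive least-energy solution, and the paper obtains the infinitely many sign-changing solutions by adapting \cite[Theorem 3.7]{cpa0}, which implements precisely the genus-plus-invariant-cone-neighborhood scheme you sketch. The only imprecision is that deforming away from neighborhoods of $\pm\mathcal{P}$ lowers the genus by at most one rather than leaving it unchanged (harmless, since your minimax classes have arbitrarily large genus), and your alternative route via $\lambda\to-\infty$ segregation would, as Theorem \ref{thm:separation} indicates, only produce a least-energy sign-changing solution rather than infinitely many.
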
 

If $\Omega_0=\emptyset$ we may take $\gamma=0$, and \eqref{eq:Henon} becomes
\begin{equation*} 
-\Delta w = |w|^{p-2}w \;\text{in }\Omega, \qquad w=0 \; \text{on }\partial\Omega.
\end{equation*}
For this special case, Theorem \ref{thm:equation} was proved in \cite[Theorem 2.3]{cpa}. The method that we will use to prove our results is an extension of the method used in \cite{cpa}.

If $\gamma>0$ and $\Omega_0\neq\emptyset$, then $\Omega\cap F\neq\emptyset$ and problem \eqref{eq:Henon} is of Hénon-type. When $\Omega$ is the unit ball $B$ and $F=\{0\}$ it is the well known Hénon problem
\begin{equation} \label{eq:Bhenon}
-\Delta w = |x|^\gamma|w|^{p-2}w \;\text{in }B, \qquad w=0 \; \text{on }\partial B,
\end{equation}
which has been widely studied, starting with the pioneering work \cite{ni} of W.-M. Ni, who proved the existence of a positive radial solution if $\gamma>\gamma_p$ or, equivalently, if $p<p_\gamma:=\frac{2(N+\gamma)}{N-2}$. A Pohozhaev-type identity shows that \eqref{eq:Bhenon} does not have a nontrivial solution if $p\geq p_\gamma$; cf. Proposition \ref{prop:pohozhaev} below.

Other special cases of Theorem \ref{thm:equation} are given in \cite{bs,dp}. In \cite{bs} Badiale and Serra established the existence of a positive $G$-invariant solution to \eqref{eq:Bhenon} for the group $G=O(m)\times O(n)$, $m+n=N$, and $p$ and $\gamma$ as in Theorem \ref{thm:equation}. In \cite{dp} dos Santos and Pacella studied problem \eqref{eq:Bhenon} when $N=2m$, $G=O(m)\times O(m)$, and $F$ is either $\{0\}$ or $\mathbb{R}^m\times \{0\}$. For $p\in(2,2_{m+1}^*)$ and large enough $\gamma>0$, they established the existence of a positive least energy $G$-invariant solution which blows up at a $G$-orbit of minimal dimension in $\partial B\smallsetminus F$ as $\gamma\to\infty$. We believe that a similar blow-up behavior is also true in our more general setting. 

The Hénon problem \eqref{eq:Bhenon} has also been studied in general bounded domains without any symmetries, and bubbling solutions have been constructed for exponents $p$ which are, either close to $2^*_N$, or slightly below the critical Hénon exponent $p_\gamma$. We refer to the recent papers \cite{clp, dfm, ggn} for a detailed account.

As was shown in \cite{ctv} by Conti, Terracini and Verzini for a subcritical system, the positive least energy solutions to the supercritical system \eqref{eq:system} exhibit also phase separation as $\lambda\to -\infty$. More precisely, one has the following result.

\begin{theorem} \label{thm:separation}
Assume that, for some sequence $(\lambda_{k})$ with $\lambda_{k}\to -\infty$, there exists a positive fully nontrivial $G$-invariant solution $(u_{k},v_{k})$ to the system \eqref{eq:system} with $\lambda=\lambda_{k}$, which has least energy among all fully nontrivial $G$-invariant solutions to that system. Then, after passing to a subsequence, we have that
\begin{itemize}
\item[$(a)$]$u_{k}\to u_{\infty}$ and $v_{k}\to v_{\infty}$ strongly in $D_0^{1,2}(\Omega)$,
\item[$(b)$]$u_{\infty}$ and $v_{\infty}$ are $G$-invariant, $u_{\infty}\geq 0$, $v_{\infty}\geq 0$ and $u_{\infty}v_{\infty}\equiv 0$,
\item[$(c)$]$u_{\infty}-v_{\infty}$ is a least energy $G$-invariant sign-changing solution to the problem
\begin{equation} \label{eq:signchanging}
\begin{cases}
-\Delta w = |x_2|^\gamma\left(\mu_{1}|w^+|^{p-2}w^+ + \mu_{2}|w^-|^{p-2}w^-\right) &\text{ in } \Omega, \\
w=0 &\text{ on } \partial\Omega,
\end{cases}
\end{equation}
where $w^{+} := \max \{w,0\}$ and $w^{-} := \min \{w,0\}$.
\end{itemize}
\end{theorem}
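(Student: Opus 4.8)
The plan is to follow the Conti–Terracini–Verzini strategy as adapted to the symmetric supercritical setting, working throughout in the $G$-invariant energy space $D^{1,2}_0(\Omega)^G$. The first and crucial step is a \emph{uniform energy bound}. Since each $(u_k,v_k)$ is a least-energy fully nontrivial $G$-invariant solution for $\lambda=\lambda_k<0$, I would compare its energy $c_{\lambda_k}$ with a fixed $\lambda$-independent test pair, or more efficiently with the energy of a sign-changing solution to \eqref{eq:signchanging}, to conclude that $c_{\lambda_k}$ stays bounded above as $\lambda_k\to-\infty$. Because $\lambda_k<0$ the coupling term $\lambda_k\int|x_2|^\gamma|u_k|^\alpha|v_k|^\beta$ is nonpositive, so the Nehari-type identities force $\|u_k\|^2+\|v_k\|^2$ to be bounded and the coupling integral $|\lambda_k|\int|x_2|^\gamma|u_k|^\alpha|v_k|^\beta\to 0$; this last fact is the engine driving $u_\infty v_\infty\equiv 0$. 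Here I would use the $G$-invariant Sobolev-type embeddings that underlie Theorem 1 (encoding $p<2^*_{N-d}$ and the condition $\gamma>\gamma_p$ on $\Omega_0$) to guarantee that the relevant weighted $L^p$-norms remain controlled and that bounded sequences in $D^{1,2}_0(\Omega)^G$ are compact in the weighted $L^p$-space.

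Having the uniform bound, the second step extracts a weakly convergent subsequence $u_k\rightharpoonup u_\infty$, $v_k\rightharpoonup v_\infty$, upgraded to strong convergence in $D^{1,2}_0(\Omega)$ by the compact embedding; passing to the limit in the coupling integral gives $\int|x_2|^\gamma|u_\infty|^\alpha|v_\infty|^\beta=0$, hence $u_\infty v_\infty\equiv 0$ a.e.\ (part (b)), while $u_\infty,v_\infty\geq 0$ and $G$-invariance survive the limit. Strong convergence (part (a)) I would establish by the standard Brezis–Lieb / Nehari argument: testing the equations against $u_k-u_\infty$ and $v_k-v_\infty$ and using that the coupling contribution vanishes in the limit forces $\|u_k-u_\infty\|\to 0$ and similarly for $v$, precisely because the pure-power terms are handled by the compact weighted embedding.

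The third step is to identify $w_\infty:=u_\infty-v_\infty$ as a least-energy sign-changing $G$-invariant solution of \eqref{eq:signchanging} (part (c)). Since $u_\infty v_\infty\equiv 0$ we have $w_\infty^+=u_\infty$ and $w_\infty^-=-v_\infty$ up to the usual sign bookkeeping, so passing to the limit in the system shows $w_\infty$ solves \eqref{eq:signchanging} weakly; that it is sign-changing follows from the fact that both $u_\infty$ and $v_\infty$ are nontrivial, which in turn comes from the least-energy property preventing the components from collapsing. The energy-minimality of $w_\infty$ among $G$-invariant sign-changing solutions I would obtain by a two-sided comparison: the limiting energy is the $\lim$ of the $c_{\lambda_k}$, which I bound below by the least sign-changing energy of \eqref{eq:signchanging} (any competitor $w$ yields an approximate solution pair $(w^+,w^-)$ of the system with vanishing coupling, so its energy is asymptotically achievable), and above by producing, from any candidate sign-changing solution, a recovery sequence.

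\textbf{The main obstacle.} The hardest part is the two-sided energy comparison in the supercritical regime: the usual subcritical compactness machinery of \cite{ctv} is unavailable, so I must run every compactness and Brezis–Lieb argument inside the $G$-invariant weighted space where $p$ is effectively subcritical, and I must check that the recovery sequence one builds from a sign-changing solution of \eqref{eq:signchanging} can be realized as fully nontrivial solutions of the $\lambda_k$-system with the correct energy asymptotics. Controlling the interaction term $|\lambda_k|\int|x_2|^\gamma|u_k|^\alpha|v_k|^\beta$ simultaneously with the large negative coupling constant — showing it tends to zero at the right rate rather than merely staying bounded — is the delicate quantitative point on which the separation conclusion $u_\infty v_\infty\equiv 0$ ultimately rests.
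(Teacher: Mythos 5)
Your overall strategy (uniform energy bound by comparison with sign-changing functions for \eqref{eq:signchanging}, extraction of weak limits, separation from the Nehari identities, identification of $u_\infty-v_\infty$) is the right one, and the upper bound $c^G_{\lambda}\leq c^G_\infty$ via exact test pairs $(w^+,w^-)$ with $w\in\mathcal{E}^G$ is exactly what the paper does. But there is a genuine gap at the step you yourself flag as delicate, and the resolution is not the one you propose. You want to obtain strong convergence by testing the equations against $u_k-u_\infty$, and to show that $u_\infty-v_\infty$ solves \eqref{eq:signchanging} by ``passing to the limit in the system.'' Both of these require controlling terms of the form $\lambda_k\alpha\int_\Omega|x_2|^\gamma|u_k|^{\alpha-2}u_k|v_k|^\beta\vartheta$ with $|\lambda_k|\to\infty$. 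The only information available from the Nehari identities $f_{\lambda_k}+h_{\lambda_k}=0$ is that $|\lambda_k|\int_\Omega|x_2|^\gamma|u_k|^\alpha|v_k|^\beta$ stays \emph{bounded}; this gives $\int_\Omega|x_2|^\gamma|u_k|^\alpha|v_k|^\beta\to 0$ (which is all that is needed for $u_\infty v_\infty\equiv 0$, so your worry about a ``rate'' there is misplaced), but it does \emph{not} make the $\lambda_k$-weighted mixed integrals vanish when tested against $u_k-u_\infty$ or against a fixed $\vartheta$. No H\"older manipulation rescues this, and you offer no mechanism to close it.

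The paper circumvents the difficulty entirely: it never passes to the limit in the equation. Instead, from Proposition \ref{prop:nehari}$(a)$ one gets $c_0\leq\|u_k\|^2\leq\mu_1|u_k|^p_{p,\gamma}$ (and likewise for $v_k$), so in the limit $u_\infty,v_\infty\neq 0$ and $\|u_\infty\|^2\leq\mu_1|u_\infty|^p_{p,\gamma}$, $\|v_\infty\|^2\leq\mu_2|v_\infty|^p_{p,\gamma}$. Choosing $s,t\in(0,1]$ with $su_\infty-tv_\infty\in\mathcal{E}^G$ yields the sandwich
\begin{equation*}
c_\infty^G \leq \tfrac{p-2}{2p}\left(\|su_\infty\|^2+\|tv_\infty\|^2\right) \leq \tfrac{p-2}{2p}\left(\|u_\infty\|^2+\|v_\infty\|^2\right) \leq \tfrac{p-2}{2p}\lim_{k\to\infty}\left(\|u_k\|^2+\|v_k\|^2\right)=\lim_{k\to\infty}c^G_{\lambda_k}\leq c_\infty^G,
\end{equation*}
which forces equality throughout; this \emph{simultaneously} gives norm convergence (hence strong convergence in $D^{1,2}_0(\Omega)$), $s=t=1$, $u_\infty-v_\infty\in\mathcal{E}^G$ and $J(u_\infty-v_\infty)=c^G_\infty$. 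That a minimizer of $J$ on $\mathcal{E}^G$ is a critical point of $J$ --- hence a least energy sign-changing solution of \eqref{eq:signchanging} --- is then a purely variational fact (as in Castro--Cossio--Neuberger), requiring no limit passage in the PDE. You should replace your Brezis--Lieb/testing step and your ``recovery sequence'' discussion with this energy-comparison argument; as written, your proof of parts $(a)$ and $(c)$ does not go through.
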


This paper is organized as follows. In Section \ref{sec:compactness} we show that our assumptions on the symmetries yield a good variational setting for the system \eqref{eq:system} and we discuss the variational problem. Section \ref{sec:competitive} is devoted to the proofs of Theorem \ref{thm:mainthm} for $\lambda<0$ and Theorem \ref{thm:separation}. In Section \ref{sec:cooperative} we prove Theorem \ref{thm:equation} and we derive Theorem \ref{thm:mainthm} for $\lambda>0$ from it.

\section{The symmetric variational setting} \label{sec:compactness}

Let $G$ be a closed subgroup of $O(N)$, $F$ be a $G$-invariant linear subspace of $\mathbb{R}^N$ and $\Omega$ be a $G$-invariant bounded smooth domain in $\mathbb{R}^N$, which satisfy $(F_1)$ and $(F_2)$.

For $1\leq p <\infty$ and $\gamma\geq 0$ we denote by 
$$L^p(\Omega;|x_2|^\gamma):=\{w:\Omega\to\mathbb{R}:|x_2|^{\gamma/p}w\in L^p(\Omega)\}$$
the weighted Lebesque space with the norm given by 
$$|w|_{p,\gamma}:=\left(\int_\Omega |x_2|^{\gamma}|w|^p\right)^{1/p}.$$
Note that assumption $(F_1)$ guarantees that this is, indeed, a norm. As usual, we write $D^{1,2}_0(\Omega)$ for the closure of $\mathcal{C}_c^\infty(\Omega)$ in the Sobolev space $D^{1,2}(\mathbb{R}^N):=\{w\in L^{2^*}:\nabla w\in L^2(\mathbb{R}^N,\mathbb{R}^N)\}$, equiped with the norm 
$$\|w\|:=\left(\int_{\mathbb{R}^N}|\nabla w|^2\right)^{1/2}.$$

A \textit{(weak) solution to the system} \eqref{eq:system} is a pair $(u,v)$ such that $u,v\in D^{1,2}_0(\Omega)\cap L^p(\Omega;|x_2|^\gamma)$, and they satisfy the identities
\begin{align}
&\int_{\Omega}\nabla u\cdot\nabla\vartheta - \mu_1\int_{\Omega}|x_2|^{\gamma}|u|^{p-2}u\vartheta - \lambda \alpha \int_{\Omega} |x_2|^{\gamma}|u|^{\alpha-2}u|v|^{\beta}\vartheta = 0, \label{eq:sol1}\\
&\int_{\Omega}\nabla v\cdot\nabla\vartheta - \mu_2\int_{\Omega}|x_2|^{\gamma}|v|^{p-2}v\vartheta - \lambda \beta \int_{\Omega} |x_2|^{\gamma}|u|^{\alpha}|v|^{\beta-2}v\vartheta=0, \label{eq:sol2}
\end{align}
for every $\vartheta\in \mathcal{C}_c^\infty(\Omega)$.

Hebey and Vaugon showed in \cite{hv} that the Sobolev embedding and the Rellich-Kondrachov theorems can be improved in $G$-invariant domains whose $G$-orbits have positive dimension. Ivanov and Nazarov obtained an extension of this result in \cite{in}, which allows to consider domains with finite $G$-orbits. These results will play a crucial role in the proof of our main result. The version that we need will be derived from them next.

Set
$$D^{1,2}_0(\Omega)^G:=\{w\in D^{1,2}_0(\Omega):w \text{ is } G\text{-invariant}\},$$
and recall the definitions of\; $\Omega_0:=\{x\in\Omega:\dim Gx=0\}$,
$$d:= \min \{\dim Gx:x\in\Omega\smallsetminus\Omega_0\}, \qquad \gamma_p:= p\left(\frac{N}{2}-\frac{N}{p}-1\right),$$
$2_k^*:=\frac{2k}{k-2}$ if $k>2$ \, and \, $2_k^*:=\infty$ if $k\leq 2$.

\begin{theorem}[Hebey-Vaugon 1997; Ivanov-Nazarov 2016] \label{thm:in}
If $p\in \mathbb{R}\cap[1,2_{N-d}^*]$ and $\gamma\geq \max\{\gamma_p,0\}$, then the embedding
$$D^{1,2}_0(\Omega)^G \hookrightarrow L^p(\Omega;|x_2|^\gamma)$$
is continuous. Moreover, this embedding is compact if $p\in[1,2_{N-d}^*)$ and, either $\Omega_0=\emptyset$, or $\Omega_0\neq\emptyset$ and $\gamma > \max\{\gamma_p,0\}$.
\end{theorem}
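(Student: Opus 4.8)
The plan is to obtain this statement by combining the two cited results — Hebey--Vaugon for orbits of positive dimension and the Ivanov--Nazarov weighted refinement near finite orbits — after localizing the problem with a $G$-invariant partition of unity. First I would recall the precise form of each. Hebey--Vaugon assert that, on a relatively compact piece of $\Omega$ where every $G$-orbit has dimension $\geq d$, a $G$-invariant function behaves as if it lived in dimension $N-d$, so that $D^{1,2}_0$-bounded $G$-invariant functions embed into $L^p$ for $p\le 2^*_{N-d}$, compactly for $p<2^*_{N-d}$; Ivanov--Nazarov supply the analogous weighted statement that tolerates finite orbits provided these lie inside the degeneracy set of the weight.

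Next I would split $\overline\Omega$ into a $G$-invariant neighborhood $W$ of $\Omega_0$ and its complement, and fix a $G$-invariant partition of unity $\{\chi_0,\chi_1\}$ subordinate to this cover, noting that each $\chi_i w$ is again $G$-invariant and, since $\Omega$ is bounded, satisfies $\|\chi_i w\|\le C\|w\|$. On $\operatorname{supp}\chi_1\subset\overline\Omega\setminus\Omega_0$ every orbit has dimension $\ge d$, so Hebey--Vaugon give the embedding into $L^p$, compact when $p<2^*_{N-d}$; as $\Omega$ is bounded and $\gamma\ge0$, the weight $|x_2|^\gamma$ is bounded above there and may be absorbed into the constant, upgrading the conclusion to $L^p(\Omega;|x_2|^\gamma)$. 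This region thus requires only $p\le 2^*_{N-d}$ and no lower bound on $\gamma$, which is consistent with the fact that when $\Omega_0=\emptyset$ no weight condition is needed for compactness.

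The essential work is on $\operatorname{supp}\chi_0$, a neighborhood of $\Omega_0\subset F$, where orbits may be trivial and the symmetry provides no dimensional gain, so one must extract all of the integrability from the vanishing of $|x_2|^\gamma$ on $F$ through a cylindrical Caffarelli--Kohn--Nirenberg (Hénon-type) inequality — precisely the content of the Ivanov--Nazarov estimate. The threshold $\gamma_p$ is fixed by scaling: the transformation $w_\rho(x)=\rho^{(2-N)/2}w(x/\rho)$ leaves $\|\cdot\|$ invariant while $|w_\rho|_{p,\gamma}^p=\rho^{\frac{p(2-N)}{2}+\gamma+N}\,|w|_{p,\gamma}^p$, and the exponent vanishes exactly when $\gamma=\gamma_p$. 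Hence $\gamma\ge\gamma_p$ is the critical condition under which the weighted bound $|w|_{p,\gamma}\le C\|w\|$ survives concentration at a point of $F$, while $\gamma>\gamma_p$ makes the embedding strictly subcritical near $F$, forcing the weighted mass of any concentrating sequence to zero and thereby yielding compactness. Summing the two pieces over the partition produces the stated continuity and compactness.

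The step I expect to be the main obstacle is this near-$\Omega_0$ analysis: away from the finite orbits the argument is the standard symmetry-improved Sobolev embedding, but at $\Omega_0$ there is no orbit reduction to lean on. Making the cited weighted inequalities apply to the specific geometry — the distance $|x_2|$ to a linear \emph{subspace} rather than to a point — within the $D^{1,2}_0(\Omega)$ framework, and patching the local estimates uniformly without losing the sharp $\gamma_p$ threshold, is where the care is required. Here $(F_2)$ is exactly what guarantees that the degeneracy set of the weight covers $\Omega_0$, and $(F_1)$ is what keeps $|\cdot|_{p,\gamma}$ a genuine norm.
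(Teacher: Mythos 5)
Your overall architecture (Hebey--Vaugon away from $\Omega_0$, Ivanov--Nazarov near it) points at the right references, and your scaling computation correctly identifies $\gamma_p$ as the critical exponent for concentration at a point of $F$. But the step you yourself flag as ``the main obstacle'' --- making the Ivanov--Nazarov estimate, which is formulated for the weight $\mathfrak{r}(x)^{\gamma}$ with $\mathfrak{r}$ the distance to the set $\Omega_0$ of finite orbits, apply to the weight $|x_2|^{\gamma}$, the distance to the subspace $F$ --- is left unresolved, and it is precisely there that the paper's (one-line) argument lives. Assumption $(F_2)$ gives $\Omega_0\subset F$, hence $|x_2|=\mathrm{dist}(x,F)\leq \mathrm{dist}(x,\Omega_0)\leq\mathfrak{r}(x)$ and, since $\gamma\geq 0$, $|w|_{p,\gamma}\leq\bigl(\int_{\Omega}\mathfrak{r}(x)^{\gamma}|w|^{p}\bigr)^{1/p}$. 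So one does not need to adapt the cited inequality to the new geometry at all: the $|x_2|^{\gamma}$-weighted norm is \emph{dominated} by the $\mathfrak{r}^{\gamma}$-weighted one, and both continuity and compactness are inherited verbatim from \cite[Theorem 2.4]{in} applied on all of $\Omega$; no partition of unity is required. (For $\Omega_0=\emptyset$ one simply bounds $|x_2|^{\gamma}$ above and quotes \cite{hv}.)

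Second, your description of the mechanism near $\Omega_0$ is not correct, and taken literally the plan would fail there. You write that near $\Omega_0$ ``the symmetry provides no dimensional gain, so one must extract all of the integrability from the vanishing of $|x_2|^{\gamma}$'', i.e., from a symmetry-free cylindrical Hardy--Sobolev inequality. A weight that vanishes on $F$ but is bounded below on compact subsets of $\Omega\smallsetminus F$ cannot by itself push the exponent beyond $2^*_N$: a sequence concentrating at scale $\varepsilon\to 0$ around a point $x_0\notin\Omega_0$ at fixed small distance from $\Omega_0$ sees an essentially constant weight, and the usual Sobolev obstruction at $2^*_N$ reappears, so the range $p\in(2^*_N,2^*_{N-d})$ would be lost. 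What rules this out is that such an $x_0$ satisfies $\dim Gx_0\geq d$, so a $G$-invariant function must spread its mass over a positive-dimensional orbit; the content of the Ivanov--Nazarov theorem is exactly the interplay between this residual symmetry (whose effect degenerates as the orbits shrink toward $\Omega_0$) and the compensating weight $\mathfrak{r}^{\gamma}$ with $\gamma>\gamma_p$. If you defer entirely to their theorem for that region the gap closes, but then the ``cylindrical CKN'' framing is a mischaracterization, and the only genuinely missing ingredient is the weight comparison $|x_2|\leq\mathfrak{r}(x)$ above.
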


\begin{proof}
If $\Omega_0=\emptyset$ and $\gamma=0$ this statement is proved in \cite[Corollary 2]{hv}. The result for $\Omega_0=\emptyset$ and $\gamma>0$ follows immediately from it.

If $\Omega_0\neq\emptyset$ and $\gamma > \max\{\gamma_p,0\}$, let $\mathfrak{r}(x)$ denote the Riemannian distance in $\Omega$ from $x$ to $\Omega_0$. The statement of this theorem with $L^p(\Omega;|x_2|^\gamma)$ replaced by $L^p(\Omega;\mathfrak{r}(x)^\gamma)$ was proved in \cite[Theorem 2.4]{in}. Since assumption $(F_2)$ implies that $|x_2|\leq \mathfrak{r}(x)$ for every $x\in\Omega$, our claim follows.
\end{proof}

From now on we will assume that $p \in (2, 2^*_{N-d})$ and that $\gamma > \max\{\gamma_p,0\}$ if $\Omega_0\neq\emptyset$. We write
$$S_{p,\gamma}^G:=\inf_{\substack{u \in D_0^{1,2}(\Omega)^G \\ u\neq 0}}\, \frac{\|u\|^2}{\;|u|_{p,\gamma}^2}$$
for the best constant for the embedding $D_0^{1,2}(\Omega)^G \hookrightarrow L^p(\Omega; |x_2|^\gamma)$.

Let $\mathscr{D}^G:=D_0^{1,2}(\Omega)^G\times D_0^{1,2}(\Omega)^G$. Theorem \ref{thm:in} guarantees that the functional $E : \mathscr{D}^G\to \mathbb{R}$, given by
\begin{align*}
E(u, v) &:= \frac{1}{2} \int_{\Omega} \left( |\nabla u|^2 + |\nabla v|^2 \right) - \frac{1}{p}\int_{\Omega} |x_2|^\gamma\left( \mu_1 |u|^p + \mu_2 |v|^p \right)\\
& \qquad - \lambda \int_{\Omega}|x_2|^\gamma |u|^{\alpha}|v|^{\beta}
\end{align*}
is well defined. It is of class $\mathcal{C}^1$ and has the following property. 

\begin{proposition} \label{prop:solutions}
The critical points of $E:\mathscr{D}^G\to \mathbb{R}$ are the $G$-invariant solutions to the system \eqref{eq:system}.
\end{proposition}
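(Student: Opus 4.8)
The plan is to prove the two implications separately, the content being an instance of the principle of symmetric criticality adapted to the supercritical setting. First I would record the derivative of $E$. Since $E\in\mathcal{C}^1(\mathscr{D}^G)$, differentiating the integrands (using $\tfrac{d}{dt}|t|^q=q|t|^{q-2}t$ for $q>1$) gives, for all $(u,v),(\vartheta_1,\vartheta_2)\in\mathscr{D}^G$,
\begin{align*}
E'(u,v)[\vartheta_1,\vartheta_2]
&=\int_\Omega\nabla u\cdot\nabla\vartheta_1-\mu_1\int_\Omega|x_2|^\gamma|u|^{p-2}u\,\vartheta_1-\lambda\alpha\int_\Omega|x_2|^\gamma|u|^{\alpha-2}u|v|^\beta\vartheta_1\\
&\quad+\int_\Omega\nabla v\cdot\nabla\vartheta_2-\mu_2\int_\Omega|x_2|^\gamma|v|^{p-2}v\,\vartheta_2-\lambda\beta\int_\Omega|x_2|^\gamma|u|^\alpha|v|^{\beta-2}v\,\vartheta_2,
\end{align*}
all integrals being finite by Theorem~\ref{thm:in}. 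Taking $\vartheta_2=0$ and then $\vartheta_1=0$, the condition $E'(u,v)=0$ is exactly that \eqref{eq:sol1} and \eqref{eq:sol2} hold for every $\vartheta\in D_0^{1,2}(\Omega)^G$. Consequently, if $(u,v)$ is a $G$-invariant solution then $u,v\in D_0^{1,2}(\Omega)^G$ and \eqref{eq:sol1}, \eqref{eq:sol2} hold for all $\vartheta\in\mathcal{C}_c^\infty(\Omega)$, hence for all $G$-invariant directions, so $(u,v)$ is a critical point of $E$.

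For the converse I must upgrade the identities from $G$-invariant test functions to arbitrary $\vartheta\in\mathcal{C}_c^\infty(\Omega)$, and this is the step requiring care: in the supercritical range $E$ need not be defined on the full product $D_0^{1,2}(\Omega)\times D_0^{1,2}(\Omega)$, so Palais' principle cannot be quoted verbatim and I would argue by hand through group averaging. Let $\mu$ denote the normalized Haar measure of the compact group $G$ and, for $\vartheta\in\mathcal{C}_c^\infty(\Omega)$, set $\bar\vartheta(x):=\int_G\vartheta(gx)\,d\mu(g)$. Then $\bar\vartheta$ is $G$-invariant by invariance of $\mu$, and $\bar\vartheta\in\mathcal{C}_c^\infty(\Omega)^G$ since $\operatorname{supp}\bar\vartheta\subset G\cdot\operatorname{supp}\vartheta$ is compact and contained in the $G$-invariant set $\Omega$.

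The crux is that every integral in \eqref{eq:sol1} is unchanged upon replacing $\vartheta$ by $\bar\vartheta$. Applying Fubini and the change of variables $y=gx$ (with Jacobian $1$ because $g\in O(N)$), and using that $u$ is $G$-invariant, that $\nabla u$ transforms equivariantly, and---decisively---that $|x_2|$ is $G$-invariant because $F^\perp$ is a $G$-invariant subspace (so each $g$ commutes with the orthogonal projection onto $F^\perp$), one obtains for instance
$$\int_\Omega|x_2|^\gamma|u|^{p-2}u\,\bar\vartheta=\int_G\!\bigg(\int_\Omega|x_2|^\gamma|u(x)|^{p-2}u(x)\,\vartheta(gx)\,dx\bigg)d\mu(g)=\int_\Omega|x_2|^\gamma|u|^{p-2}u\,\vartheta,$$
and the same identity for the gradient and coupling terms. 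Hence the left-hand side of \eqref{eq:sol1} at $\vartheta$ equals its value at the $G$-invariant function $\bar\vartheta$, which vanishes because $(u,v)$ is a critical point; the identical argument applies to \eqref{eq:sol2}. This yields \eqref{eq:sol1} and \eqref{eq:sol2} for all $\vartheta\in\mathcal{C}_c^\infty(\Omega)$, so $(u,v)$ is a $G$-invariant solution. (The reverse passage, from $\mathcal{C}_c^\infty(\Omega)^G$ back to all directions in $D_0^{1,2}(\Omega)^G$, if needed, follows from the density of $\mathcal{C}_c^\infty(\Omega)^G$ in $D_0^{1,2}(\Omega)^G$, itself a consequence of the continuity of the averaging projection.) The main obstacle throughout is precisely this symmetric-criticality step, which the averaging computation---hinging on the $G$-invariance of $|x_2|$---resolves.
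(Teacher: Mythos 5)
Your proposal is correct and follows essentially the same route as the paper: compute $E'(u,v)$, observe that criticality gives \eqref{eq:sol1}--\eqref{eq:sol2} for $G$-invariant test functions, and then extend to arbitrary $\vartheta\in\mathcal{C}_c^\infty(\Omega)$ by averaging over the Haar measure of $G$. You merely spell out the change-of-variables computation (resting on the $G$-invariance of $u$, $v$, $|x_2|$ and $\Omega$) that the paper dismisses as ``a straightforward computation.''
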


\begin{proof}
Let $(u,v)$ be a critical point of $E:\mathscr{D}^G\to \mathbb{R}$. Then, 
\begin{align*}
\partial_u E(u,v)\varphi=&\int_{\Omega}\nabla u\cdot\nabla\varphi - \mu_1\int_{\Omega}|x_2|^{\gamma}|u|^{p-2}u\varphi - \lambda \alpha \int_{\Omega} |x_2|^{\gamma}|u|^{\alpha-2}u|v|^{\beta}\varphi = 0,\\
\partial_v E(u,v)\varphi=&\int_{\Omega}\nabla v\cdot\nabla\varphi - \mu_2\int_{\Omega}|x_2|^{\gamma}|v|^{p-2}v\varphi - \lambda \beta \int_{\Omega} |x_2|^{\gamma}|u|^{\alpha}|v|^{\beta-2}v\varphi=0, 
\end{align*}
for every $G$-invariant function $\varphi\in \mathcal{C}_c^\infty(\Omega)$.

By Theorem \ref{thm:in} we have that $u,v\in L^p(\Omega;|x_2|^\gamma)$. So we need only to prove that the identities \eqref{eq:sol1} and \eqref{eq:sol2} hold true for every $\vartheta\in \mathcal{C}_c^\infty(\Omega)$.

Let $\vartheta\in \mathcal{C}_c^\infty(\Omega)$, and define
$$\varphi(x):=\frac{1}{\mu(G)}\int_G\vartheta(gx)\mathrm{d}\mu,$$
where $\mu$ is the Haar measure on $G$; see \cite{n}. Then, $\varphi$ is $G$-invariant. A straightforward computation yields
\begin{align*}
0=\partial_u E(u,v)\varphi=&\int_{\Omega}\nabla u\cdot\nabla\vartheta - \mu_1\int_{\Omega}|x_2|^{\gamma}|u|^{p-2}u\vartheta - \lambda \alpha \int_{\Omega} |x_2|^{\gamma}|u|^{\alpha-2}u|v|^{\beta}\vartheta,\\
0=\partial_v E(u,v)\varphi=&\int_{\Omega}\nabla v\cdot\nabla\vartheta - \mu_2\int_{\Omega}|x_2|^{\gamma}|v|^{p-2}v\vartheta - \lambda \beta \int_{\Omega} |x_2|^{\gamma}|u|^{\alpha}|v|^{\beta-2}v\vartheta,
\end{align*}
cf. \cite[Lemma 2.1]{cr}. This completes the proof.
\end{proof}

We define
\begin{align*}
f(u, v) &:= \partial_u E(u, v) u = \int_{\Omega} |\nabla u|^2 - \mu_1 \int_{\Omega}|x_2|^\gamma|u|^{p} - \lambda \alpha \int_{\Omega}|x_2|^\gamma|u|^{\alpha}|v|^{\beta}, \\
h(u, v) &:= \partial_v E(u, v) v = \int_{\Omega} |\nabla v|^2 - \mu_2 \int_{\Omega}|x_2|^\gamma|v|^{p} - \lambda \beta \int_{\Omega} |x_2|^\gamma|u|^{\alpha}|v|^{\beta},
\end{align*}
and consider the set
$$\mathcal{N}^G := \{ (u, v) \in \mathscr{D}^G: u \neq 0,\, v \neq 0,\, f(u, v) = h(u, v) = 0 \}.$$
The following proposition improves \cite[Proposition 2.1]{cp}, as it allows $\alpha,\beta>2$.

\begin{proposition}{\label{prop:nehari}}
If $\lambda<0$, the following statements hold true:
\begin{itemize}
\item[$(a)$]There exists $c_0>0$ such that, for every $(u,v) \in \mathcal{N}^G$,
$$c_0 \leq \|u\|^2 \leq \mu_1|u|_{p,\gamma}^p \quad \text{and} \quad c_0 \leq \|v\|^2\leq \mu_2|v|_{p,\gamma}^p.$$
\item[$(b)$]$\mathcal{N}^G$ is a closed $\mathcal{C}^1$-submanifold of codimension $2$ of $\mathscr{D}^G$. More precisely, $\nabla f(u,v)$ and $\nabla h(u,v)$ are linearly independent and generate the orthogonal complement of the tangent space to $\mathcal{N}^G$ at $(u,v)$.
\item[$(c)$]$\mathcal{N}^G$ is a natural constraint for $E: \mathscr{D}^G\to \mathbb{R}$, i.e., the critical points of the restriction of $E$ to $\mathcal{N}^G$ are critical points of $E$.
\end{itemize}
\end{proposition}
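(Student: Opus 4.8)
The plan is to prove the three parts in the order $(a)$, $(b)$, $(c)$, since the bounds in $(a)$ secure the closedness claimed in $(b)$, and the nondegeneracy computation performed for $(b)$ immediately yields $(c)$. Throughout I will use that $(f,h)\colon\mathscr{D}^G\to\mathbb{R}^2$ is of class $\mathcal{C}^1$, which follows from the continuous embedding of Theorem \ref{thm:in} together with $\alpha,\beta>1$ and $p>2$.

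For part $(a)$ I would exploit the sign of the coupling term. Since $\lambda<0$ and $\int_\Omega|x_2|^\gamma|u|^\alpha|v|^\beta\ge 0$, the relation $f(u,v)=0$ reads $\|u\|^2=\mu_1|u|_{p,\gamma}^p+\lambda\alpha\int_\Omega|x_2|^\gamma|u|^\alpha|v|^\beta\le\mu_1|u|_{p,\gamma}^p$, which is the upper estimate, and likewise for $v$ from $h(u,v)=0$. For the lower bound I feed this into the continuous embedding: writing $|u|_{p,\gamma}\le C\|u\|$ with $C=(S_{p,\gamma}^G)^{-1/2}$ gives $\|u\|^2\le\mu_1 C^p\|u\|^p$, and since $u\neq 0$ and $p>2$ this forces $\|u\|^2\ge(\mu_1 C^p)^{-2/(p-2)}>0$. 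The analogous bound for $v$ follows identically with $\mu_2$ in place of $\mu_1$, and $c_0$ is taken to be the smaller of the two resulting constants.

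The heart of the matter is part $(b)$, which I reduce to showing that the symmetric $2\times 2$ matrix
$$M(u,v):=\begin{pmatrix}\partial_u f(u,v)u & \partial_v f(u,v)v\\ \partial_u h(u,v)u & \partial_v h(u,v)v\end{pmatrix}$$
is invertible on $\mathcal{N}^G$. Indeed, its columns are the images of $df$ and $dh$ on the two scaling directions $(u,0)$ and $(0,v)$, so $\det M\neq 0$ forces $\nabla f(u,v),\nabla h(u,v)$ to be linearly independent; then $(f,h)$ is a submersion on the open set $\{u\neq 0,\,v\neq 0\}$ and $\mathcal{N}^G$ is a codimension-$2$ $\mathcal{C}^1$-submanifold, with $\nabla f,\nabla h$ spanning the orthogonal complement of its tangent space. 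Closedness follows from $(a)$: a limit of points of $\mathcal{N}^G$ still satisfies $f=h=0$ and, by the uniform bound $\|u\|,\|v\|\ge\sqrt{c_0}$, keeps both components nonzero. The computation of $M$ is where the real work lies, and it is the only step I would flag as delicate. Direct differentiation gives the off-diagonal entries $-\lambda\alpha\beta\int_\Omega|x_2|^\gamma|u|^\alpha|v|^\beta$ and diagonal entries mixing $\|u\|^2$, $|u|_{p,\gamma}^p$ and the coupling integral; the decisive manoeuvre is to use the constraints $f=h=0$ to eliminate the $L^p$-terms. Setting $P:=2-p<0$ and $Q:=\alpha\beta\lambda\int_\Omega|x_2|^\gamma|u|^\alpha|v|^\beta\le 0$, the diagonal entries collapse to $P\|u\|^2+Q$ and $P\|v\|^2+Q$, so that $\det M=P\bigl(P\|u\|^2\|v\|^2+Q(\|u\|^2+\|v\|^2)\bigr)>0$, a product of two negative quantities. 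This is precisely where allowing $\alpha,\beta>2$ causes no difficulty: a naive sign analysis of the diagonal entries breaks down when $\alpha$ or $\beta$ exceeds $2$, but after substituting the Nehari relations every entry acquires a clean, $\alpha,\beta$-independent sign.

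Finally, part $(c)$ is a Lagrange-multiplier argument reusing the nondegeneracy of $M$. If $(u,v)$ is a critical point of $E$ restricted to $\mathcal{N}^G$, there are multipliers $\sigma,\tau$ with $\nabla E(u,v)=\sigma\nabla f(u,v)+\tau\nabla h(u,v)$. Evaluating this identity on $(u,0)$ and $(0,v)$, and using $\partial_u E(u,v)u=f(u,v)=0$ and $\partial_v E(u,v)v=h(u,v)=0$ on $\mathcal{N}^G$, I obtain the homogeneous system $M(u,v)\,(\sigma,\tau)^{\top}=0$; since $\det M\neq 0$ this gives $\sigma=\tau=0$, hence $\nabla E(u,v)=0$, so $(u,v)$ is a critical point of $E$. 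In summary, the whole proposition rests on the single computation in $(b)$: once the constraints are used to simplify $M$, the sign of its determinant is transparent, and both the manifold structure and the natural-constraint property fall out at once.
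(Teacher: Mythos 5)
Your proof is correct, and parts $(a)$ and $(c)$ coincide with the paper's argument; the genuine divergence is in $(b)$, where your computation is cleaner than the paper's. The paper evaluates $\langle\nabla f(u,v),(u,0)\rangle$ and $\langle\nabla h(u,v),(0,v)\rangle$ and leaves the diagonal entries in the form $(2-p)\mu_1\int_\Omega|x_2|^\gamma|u|^p+\lambda\alpha(2-\alpha)\int_\Omega|x_2|^\gamma|u|^\alpha|v|^\beta$, whose sign is unclear once $\alpha$ or $\beta$ exceeds $2$; it then splits into the cases $\int_\Omega|x_2|^\gamma|u|^\alpha|v|^\beta=0$ and $\neq0$, introduces the ratios $A,B$ and the quantity $D$, and runs a further case analysis on whether $\alpha,\beta$ are below or above $2$ to reach $\det(a_{ij})>0$. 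You instead use the constraints $f=h=0$ a second time to rewrite the diagonal entries as $(2-p)\|u\|^2+\lambda\alpha\beta\int_\Omega|x_2|^\gamma|u|^\alpha|v|^\beta$ (using $p=\alpha+\beta$ one checks this equals the paper's expression), after which
\[
\det M=(2-p)\left((2-p)\|u\|^2\|v\|^2+\lambda\alpha\beta\left(\int_\Omega|x_2|^\gamma|u|^\alpha|v|^\beta\right)\left(\|u\|^2+\|v\|^2\right)\right)>0
\]
is immediate, with no case distinction and no restriction coming from the size of $\alpha,\beta$. Your route also yields the uniform bound $\det M\geq(p-2)^2\|u\|^2\|v\|^2\geq(p-2)^2c_0^2$, which is stronger than the paper's estimate \eqref{eq:det} and would even streamline the proof of the Palais--Smale condition in Lemma \ref{lem:ps}, where that determinant bound is reused along a sequence. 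The only item of the paper's proof you omit is the observation that $\mathcal{N}^G\neq\emptyset$ (obtained from two nontrivial $G$-invariant functions with disjoint supports); it is not literally demanded by the statement, but it is worth recording since the genus argument of Lemma \ref{lem:genus} relies on it.
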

\begin{proof}
$(a):$ Let $(u,v) \in \mathcal{N}^G$. Then, as $\lambda < 0$, we have that $\|u\|^2 \leq \mu_1|u|_{p,\gamma}^p$. Therefore,
$$0 < S_{p,\gamma}^G  \leq \frac{\|u\|^2}{\;|u|_{p,\gamma}^2} \leq \mu_1^{2/p} \left(\|u\|^2\right)^{(p-2)/p}.$$
Multiplying this inequality by $\mu_1^{-2/p}$ and raising it to the power of $p/(p-2)$ we obtain the statement for $u$. An analogous argument can be used for $v$.

$(b):$ First, notice that $\mathcal{N}^G\neq \emptyset$. Indeed, if $\varphi_1, \varphi_2 \in \mathcal{C}_c^{\infty}(\Omega)$ are nontrivial $G$-invariant functions with disjoint supports and $s_1,s_2>0$ are such that $\|s_i\varphi_i\|^2=\mu_i|s_i\varphi_i|^p_{p,\gamma}$, then $(s_1 \varphi_1, s_2\varphi_2) \in \mathcal{N}^G$.

Statement $(a)$ implies that $\mathcal{N}^G$ is closed in $\mathscr{D}^G$. Next, we prove that $\nabla f(u,v)$ and $\nabla h(u,v)$ are linearly independent if $(u,v)\in\mathcal{N}^G$. Assume there exist $s, t \in \mathbb{R}$ such that $s \nabla f(u, v) + t \nabla h(u, v) = 0$. Taking the scalar product of this expresion with $(u, 0)$, and using the fact that $f(u,v)=0=h(u,v)$, we obtain
\begin{align*}
0 &= s \langle \nabla f(u, v), (u, 0) \rangle + t \langle \nabla h(u, v) , (u, 0) \rangle \\ 
&= s \left( (2-p)\,\mu_1 \int_{\Omega} |x_2|^\gamma|u|^p + \lambda \alpha (2 - \alpha) \int_{\Omega}|x_2|^\gamma|u|^{\alpha} |v|^{\beta} \right) \\
&\quad + t \left( -\lambda \alpha \beta \int_{\Omega}|x_2|^\gamma|u|^{\alpha} |v|^{\beta} \right) \\
&=: s a_{11} + t a_{12}.
\end{align*}
Similarly, taking the scalar product with $(0, v)$ yields
\begin{align*}
0 &= s \langle \nabla f(u, v), (0, v) \rangle + t \langle \nabla h(u, v) , (0, v) \rangle \\
&= s \left( - \lambda \alpha \beta \int_{\Omega}|x_2|^\gamma|u|^{\alpha} |v|^{\beta}  \right) \\
&\quad + t  \left( (2-p)\,\mu_2 \int_{\Omega}|x_2|^\gamma |v|^p + \lambda \beta (2 - \beta) \int_{\Omega} |x_2|^\gamma |u|^{\alpha} |v|^{\beta} \right) \\
&=: s a_{21} + t a_{22}.
	\end{align*}
We consider two cases. First, if $\int_{\Omega}|x_2|^\gamma|u|^{\alpha} |v|^{\beta} = 0$, statement $(a)$ yields
\begin{equation*}
\det(a_{ij}) = (2-p)^2 \left(\mu_1|u|^{p}_{p,\gamma} \right) \left(\mu_2 |v|^{p}_{p,\gamma} \right) \geq (2-p)^2 c_0^2 >0.
\end{equation*}
On the other hand, if $\int_{\Omega}|x_2|^\gamma|u|^\alpha |v|^{\beta} \neq 0$, then, as $\lambda<0$ and $\alpha,\beta<p$,
\begin{align}
A &:= \frac{\mu_1 \int_{\Omega}|x_2|^\gamma |u|^p}{-\lambda \int_{\Omega}|x_2|^\gamma|u|^\alpha |v|^{\beta}} \geq \left(\frac{c_0}{- \lambda p \int_{\Omega}|x_2|^\gamma|u|^{\alpha} |v|^{\beta} } + 1\right)\alpha, \label{eq:A}\\
B &:= \frac{\mu_2 \int_{\Omega} |x_2|^\gamma|v|^p}{-\lambda \int_{\Omega}|x_2|^\gamma|u|^\alpha |v|^{\beta}} \geq \left(\frac{c_0}{- \lambda p \int_{\Omega}|x_2|^\gamma|u|^{\alpha} |v|^{\beta} } + 1\right)\beta. \label{eq:B}
\end{align}
Note that
$$\det(a_{ij}) = \left(\lambda \int_{\Omega}|x_2|^\gamma|u|^{\alpha} |v|^{\beta}\right)^2 \; 
	\begin{vmatrix}
	(2-p)A - \alpha(2- \alpha) & \alpha \beta \\
	\alpha \beta & (2-p)B - \beta(2- \beta)
	\end{vmatrix}$$
and
\begin{align*}
& 	\begin{vmatrix}
	(2-p)A - \alpha(2- \alpha) & \alpha \beta \\
	\alpha \beta & (2-p)B - \beta(2- \beta)
	\end{vmatrix} \\
&\qquad= (2-p)\,((2-p)AB - [\beta(2-\beta)A + \alpha(2-\alpha)B] + 2\alpha \beta) \\
&\qquad=(p-2)\alpha\beta\left((p-2)\frac{A}{\alpha}\frac{B}{\beta}-(\beta-2)\frac{A}{\alpha}-(\alpha-2)\frac{B}{\beta}-2\right).
	\end{align*}
Next, we show that there is a constant $c_1>0$ such that
\begin{equation} \label{eq:D}
D:=(p-2)\frac{A}{\alpha}\frac{B}{\beta}-(\beta-2)\frac{A}{\alpha}-(\alpha-2)\frac{B}{\beta}-2 > \frac{c_1}{-\lambda \int_{\Omega}|x_2|^\gamma|u|^{\alpha} |v|^{\beta}}.
\end{equation}
To prove this inequality, we consider two cases. Recall that $\alpha<A$, $\beta<B$ and $\alpha + \beta = p > 2$. 
\begin{itemize}
\item[(i)]If $\alpha,\beta<2$, then $D>(p-2)\frac{A}{\alpha}-(\beta-2)-(\alpha-2)-2=(p-2)\left(\frac{A}{\alpha}-1\right).$
\item[(ii)]If $\alpha\geq 2$, then 
$$D>(p-2)\frac{A}{\alpha}\frac{B}{\beta}-\beta\frac{A}{\alpha}\frac{B}{\beta}-(\alpha-2)\frac{A}{\alpha}\frac{B}{\beta}+2\left(\frac{A}{\alpha}-1\right)=2\left(\frac{A}{\alpha}-1\right).$$
Similarly, $D>2\left(\frac{B}{\beta}-1\right)$ if $\beta\geq 2$. 
\end{itemize}
Combining these inequalities with \eqref{eq:A} and \eqref{eq:B} we obtain \eqref{eq:D}. Consequently,
\begin{equation} \label{eq:det}
\det(a_{ij})>c_2\left(\int_{\Omega}|x_2|^\gamma|u|^{\alpha} |v|^{\beta}\right),
\end{equation}
for some constant $c_2>0$. So, in any case, $\det(a_{ij})>0$ and, therefore, $\nabla f(u,v)$ and $\nabla h(u,v)$ are linearly independent, as claimed.

$(c):$ If $(u,v) \in \mathcal{N}^G$ is a critical point of the restriction $E|_{\mathcal{N}^G}$, then
$$\nabla E (u, v) = s \nabla f(u, v) + t \nabla h(u, v) \quad \text{for some } s,t \in \mathbb{R}.$$
Taking the scalar product with $(u,0)$ and $(0,v)$ we obtain
	\begin{align*}
	s \langle \nabla f(u, v), (u, 0) \rangle + t \langle \nabla h(u, v), (u, 0) \rangle  &= \langle \nabla E(u, v), (u, 0) \rangle = f(u, v) = 0, \\
	s \langle \nabla f(u, v), (0, v) \rangle + t \langle \nabla h(u, v), (0, v) \rangle &= \langle \nabla E(u, v), (0, v) \rangle = h(u, v) = 0.
	\end{align*}
But, as was seen in statement $(b)$, this happens only if $s=t=0$. Hence, $\nabla E (u, v) = 0$, as claimed.
\end{proof}

We end this section with the following nonexistence result.

\begin{proposition}\label{prop:pohozhaev}
If $\Omega$ is starshaped with respect to the origin and $p\geq p_\gamma:=\frac{2(N+\gamma)}{N-2}$, the system \eqref{eq:system} does not have a nontrivial solution.
\end{proposition}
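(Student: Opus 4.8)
The plan is to establish a Pohozhaev identity for the pair $(u,v)$ and read off the conclusion from its bulk and boundary terms. Writing $x=x_1+x_2$ with $x_1\in F$ and $x_2\in F^\perp$, I first observe that the right-hand sides of \eqref{eq:system} derive from the Hamiltonian
$$H(x,s,t):=|x_2|^\gamma\left(\tfrac{\mu_1}{p}|s|^p+\tfrac{\mu_2}{p}|t|^p+\lambda|s|^\alpha|t|^\beta\right),$$
so that $-\Delta u=\partial_s H(x,u,v)$ and $-\Delta v=\partial_t H(x,u,v)$. Since $\alpha+\beta=p$, the function $(s,t)\mapsto H(x,s,t)$ is positively homogeneous of degree $p$, whence Euler's identity gives $s\,\partial_s H+t\,\partial_t H=pH$. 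Testing the two equations with $u$ and $v$ respectively and adding then yields the Nehari-type relation
$$\int_\Omega\bigl(|\nabla u|^2+|\nabla v|^2\bigr)=p\int_\Omega \widetilde H,\qquad \widetilde H(x):=H(x,u(x),v(x)).$$

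Next I would multiply the first equation by $x\cdot\nabla u$, the second by $x\cdot\nabla v$, integrate over $\Omega$ and sum. The standard integration by parts for the Laplacian (using $u=v=0$ on $\partial\Omega$, so that $\nabla u=(\partial_\nu u)\nu$ there) produces
$$\int_\Omega(-\Delta u)(x\cdot\nabla u)=-\frac{N-2}{2}\int_\Omega|\nabla u|^2-\frac12\int_{\partial\Omega}(x\cdot\nu)\,|\partial_\nu u|^2,$$
and likewise for $v$. For the right-hand side I would use the chain rule $x\cdot\nabla\widetilde H=(x\cdot\nabla_x H)+\partial_sH\,(x\cdot\nabla u)+\partial_tH\,(x\cdot\nabla v)$, together with the key computation $x\cdot\nabla_x(|x_2|^\gamma)=\gamma|x_2|^\gamma$ (because $x_1\perp x_2$), which gives $x\cdot\nabla_x H=\gamma\widetilde H$. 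Integrating $\int_\Omega x\cdot\nabla\widetilde H=-N\int_\Omega\widetilde H$ (the boundary term vanishes since $\widetilde H=0$ on $\partial\Omega$) collapses the nonlinear side to $-(N+\gamma)\int_\Omega\widetilde H$, and the Pohozhaev identity reads
$$\frac{N-2}{2}\int_\Omega\bigl(|\nabla u|^2+|\nabla v|^2\bigr)+\frac12\int_{\partial\Omega}(x\cdot\nu)\bigl(|\partial_\nu u|^2+|\partial_\nu v|^2\bigr)=(N+\gamma)\int_\Omega\widetilde H.$$

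Combining this with the Nehari relation to eliminate $\int_\Omega\widetilde H$, and setting $T:=\int_\Omega(|\nabla u|^2+|\nabla v|^2)$, I obtain
$$\frac12\int_{\partial\Omega}(x\cdot\nu)\bigl(|\partial_\nu u|^2+|\partial_\nu v|^2\bigr)=\left(\frac{N+\gamma}{p}-\frac{N-2}{2}\right)T.$$
Since $\Omega$ is starshaped with respect to the origin, $x\cdot\nu\geq0$ on $\partial\Omega$, so the left side is nonnegative; while $p\geq p_\gamma=\frac{2(N+\gamma)}{N-2}$ forces the coefficient on the right to be nonpositive, and $T\geq0$. Hence both sides vanish. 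For $p>p_\gamma$ the coefficient is strictly negative, so $T=0$ and therefore $(u,v)\equiv(0,0)$.

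For the borderline case $p=p_\gamma$ the bulk term degenerates and only the boundary term is seen to vanish, so this is the delicate point: I would assume $\Omega$ strictly starshaped (so that $x\cdot\nu>0$) to deduce $\partial_\nu u=\partial_\nu v=0$ on $\partial\Omega$, and then invoke unique continuation from the zero Cauchy data to conclude $u\equiv v\equiv0$. The other point requiring care is the justification of the integration by parts itself: in the supercritical regime a weak solution in $D^{1,2}_0(\Omega)\cap L^p(\Omega;|x_2|^\gamma)$ is not a priori regular enough for the boundary integrals to be meaningful, so one must first upgrade regularity (via elliptic estimates and bootstrap, using smoothness of $\partial\Omega$ and, where $\gamma>0$, controlling the behaviour near $F\cap\Omega$ where the weight degenerates) to make $\partial_\nu u,\partial_\nu v$ well defined on $\partial\Omega$. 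This regularity step, rather than the algebra, is the main obstacle.
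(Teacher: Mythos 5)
Your argument is correct and follows essentially the same route as the paper: the authors obtain the identical identity by specializing the Pucci--Serrin variational identity (with $h(x)=x$, $a=0$) rather than integrating by parts directly, combine it with the Nehari relations $f(u,v)=h(u,v)=0$ exactly as you do, and likewise dispatch the borderline case $p=p_\gamma$ by invoking unique continuation. The regularity caveat you raise is real but is not addressed in the paper either, so your proposal matches the published proof in both substance and level of rigor.
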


\begin{proof}
If $(u,v)$ is a solution to \eqref{eq:system}, then the identity in  \cite[Proposition 3]{pus} with $\mathscr{F}(x,u,v,y,z):=\frac{1}{2}(|y|^2+|z|^2)-\frac{1}{p}|x_2|^\gamma(\mu_1 |u|^p + \mu_2 |v|^p) - \lambda|x_2|^\gamma|u|^\alpha|v|^\beta$, $h(x):=x$ and $a:=0$, $x\in\Omega$, $u,v\in\mathbb{R}$, $y,z\in\mathbb{R}^N$, holds true. Integrating this identity over $\Omega$ and noting that $f(u,v)=0=h(u,v)$ we get
\begin{align*}
&-\frac{1}{2}\int_{\partial\Omega}\left(\left|\frac{\partial u}{\partial \nu}\right|^2+\left|\frac{\partial v}{\partial \nu}\right|^2\right)(x\cdot \nu) \\
&\qquad\quad=\frac{N-2}{2}\int_\Omega(|\nabla u|^2+|\nabla v|^2)-\frac{N+\gamma}{p}\int_\Omega|x_2|^\gamma(\mu_1 |u|^p + \mu_2 |v|^p) \\
&\qquad\qquad - \lambda(N+\gamma)\int_\Omega|x_2|^\gamma|u|^\alpha|v|^\beta \\
&\qquad\quad=\left(\frac{N-2}{2}-\frac{N+\gamma}{p}\right)\int_\Omega(|\nabla u|^2+|\nabla v|^2).
\end{align*}
Observe that $\frac{N-2}{2}\geq\frac{N+\gamma}{p}$ iff $p\geq p_\gamma$. So the result follows immediately from this identity if $p>p_\gamma$. If $p=p_\gamma$, we apply the unique continuation principle.
\end{proof}

\section{The competitive system} \label{sec:competitive}

Throughout this section we assume that $\lambda<0$. We continue to assume that $p \in (2,2^*_{N-d})$ and that $\gamma > \max\{\gamma_p,0\}$ if $\Omega_0\neq\emptyset$.

Our first goal is to prove Theorem \ref{thm:mainthm} for $\lambda<0$. Using a critical point result due to Szulkin \cite{sz} we will show that the functional $E$ restricted to $\mathcal{N}^G$ has infinitely many critical points. By Propositions \ref{prop:solutions} and \ref{prop:nehari}, they are fully nontrivial solutions to the system \eqref{eq:system}.

We write $\nabla_{\mathcal{N}^G} E (u,v)$ for the orthogonal projection of $\nabla E(u,v)$ onto the tangent space to $\mathcal{N}^G$ at $(u, v)$. 

The proofs of the following two lemmas are similar to those of the analogous statements in \cite{cp}. We include them here for the sake of completeness.

\begin{lemma} \label{lem:ps}
The functional $E:\mathcal{N}^G\to\mathbb{R}$ satisfies the Palais-Smale condition $(PS)_c$ for every $c\in \mathbb{R}$, i.e., every sequence $((u_k,v_k))$ in $\mathcal{N}^G$ such that
$$E(u_k,v_k)\to c \quad \text{and} \quad \nabla_{\mathcal{N}^G} E (u,v) \to 0,$$
contains a subsequence which converges strongly in $\mathscr{D}$.
\end{lemma}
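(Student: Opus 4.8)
The plan is to exploit the fact that, on $\mathcal{N}^G$, the functional $E$ reduces to a positive multiple of the squared norm. Adding the two constraints $f(u,v)=h(u,v)=0$ and using $\alpha+\beta=p$ gives
\[
\|u\|^2+\|v\|^2=\int_\Omega|x_2|^\gamma\bigl(\mu_1|u|^p+\mu_2|v|^p\bigr)+\lambda p\int_\Omega|x_2|^\gamma|u|^\alpha|v|^\beta,
\]
and substituting this into the definition of $E$ yields $E(u,v)=\left(\tfrac12-\tfrac1p\right)\bigl(\|u\|^2+\|v\|^2\bigr)$ for every $(u,v)\in\mathcal{N}^G$. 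Since $p>2$, a sequence with $E(u_k,v_k)\to c$ is therefore bounded in $\mathscr{D}^G$. Passing to a subsequence, I would obtain $u_k\rightharpoonup u$ and $v_k\rightharpoonup v$ weakly in $D_0^{1,2}(\Omega)^G$, and, by the compactness part of Theorem \ref{thm:in}, $u_k\to u$ and $v_k\to v$ strongly in $L^p(\Omega;|x_2|^\gamma)$.

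Next I would convert the constrained Palais--Smale condition into an unconstrained one. Writing the orthogonal projection as $\nabla_{\mathcal{N}^G}E(u_k,v_k)=\nabla E(u_k,v_k)-s_k\nabla f(u_k,v_k)-t_k\nabla h(u_k,v_k)$ and pairing with the bounded vectors $(u_k,0)$ and $(0,v_k)$, the identities $f(u_k,v_k)=h(u_k,v_k)=0$ reduce the hypothesis $\nabla_{\mathcal{N}^G}E(u_k,v_k)\to 0$ to the linear system
\[
s_k a_{11}^{(k)}+t_k a_{12}^{(k)}=o(1),\qquad s_k a_{21}^{(k)}+t_k a_{22}^{(k)}=o(1),
\]
whose coefficients $a_{ij}^{(k)}$ are exactly those computed in the proof of Proposition \ref{prop:nehari}$(b)$, now evaluated at $(u_k,v_k)$.

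The main obstacle is to deduce $s_k,t_k\to 0$, for which I need a uniform positive lower bound on $\det\bigl(a_{ij}^{(k)}\bigr)$. Each $a_{ij}^{(k)}$ is a continuous function of the integrals $\int_\Omega|x_2|^\gamma|u_k|^p$, $\int_\Omega|x_2|^\gamma|v_k|^p$ and $\int_\Omega|x_2|^\gamma|u_k|^\alpha|v_k|^\beta$, which are bounded and, along the subsequence, convergent by the strong $L^p(\Omega;|x_2|^\gamma)$ convergence. Moreover, the lower bounds $\|u_k\|^2,\|v_k\|^2\ge c_0$ of Proposition \ref{prop:nehari}$(a)$, which on $\mathcal{N}^G$ read $\mu_1\int_\Omega|x_2|^\gamma|u_k|^p+\lambda\alpha\int_\Omega|x_2|^\gamma|u_k|^\alpha|v_k|^\beta\ge c_0$ and symmetrically for $v_k$, pass to the limit; hence the inequalities \eqref{eq:A}--\eqref{eq:D} underlying the determinant estimate \eqref{eq:det} persist for the limiting integrals, forcing the limiting determinant to be strictly positive. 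By continuity, $\det\bigl(a_{ij}^{(k)}\bigr)$ stays bounded below by a positive constant, and since the entries are bounded, Cramer's rule gives $s_k,t_k\to 0$. As $\nabla f(u_k,v_k)$ and $\nabla h(u_k,v_k)$ are bounded, I conclude $\nabla E(u_k,v_k)\to 0$ in $\mathscr{D}^G$.

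Finally I would upgrade weak convergence to strong. Testing $\nabla E(u_k,v_k)\to 0$ against $(u_k-u,0)$ and against $(0,v_k-v)$, the nonlinear terms such as $\int_\Omega|x_2|^\gamma|u_k|^{p-2}u_k(u_k-u)$ and $\int_\Omega|x_2|^\gamma|u_k|^{\alpha-2}u_k|v_k|^\beta(u_k-u)$ tend to $0$ by Hölder's inequality (with exponents $\tfrac{p}{p-1}$, respectively $\tfrac{p}{\alpha-1},\tfrac{p}{\beta},p$) together with the strong $L^p(\Omega;|x_2|^\gamma)$ convergence, while $\langle\nabla u,\nabla(u_k-u)\rangle\to 0$ by weak convergence. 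This leaves $\|u_k-u\|^2\to 0$ and, symmetrically, $\|v_k-v\|^2\to 0$, giving strong convergence of $(u_k,v_k)$ in $\mathscr{D}$ and completing the verification of $(PS)_c$.
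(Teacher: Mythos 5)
Your argument is correct and follows essentially the same route as the paper: boundedness from $E=(\tfrac12-\tfrac1p)(\|u\|^2+\|v\|^2)$ on $\mathcal{N}^G$, the decomposition of $\nabla E$ along $\nabla f,\nabla h$, the $2\times 2$ linear system for $s_k,t_k$ with a uniform lower bound on $\det(a_{ij}^{(k)})$ taken from the proof of Proposition \ref{prop:nehari}, and compactness of the embedding to upgrade to strong convergence. The only point to tighten is the determinant step: when $\int_\Omega|x_2|^\gamma|u_k|^\alpha|v_k|^\beta\to 0$ the inequalities \eqref{eq:A}--\eqref{eq:D} degenerate and \eqref{eq:det} gives no positive bound, so that case must be treated separately (as the paper does) by observing that the limiting matrix is then diagonal with entries $(2-p)\mu_i$ times integrals bounded below by $c_0$ — an observation your remark about the lower bounds passing to the limit already contains, but which should be invoked explicitly rather than via \eqref{eq:A}--\eqref{eq:D}.
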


\begin{proof}
Let $((u_k,v_k))$ be as above. It is easy to see that the sequences $((u_k, v_k))$, $(\nabla f(u_k, v_k))$ and $(\nabla h(u_k, v_k))$ are bounded in $\mathscr{D}^G$. 

We claim that $\nabla E(u_k,v_k)\to 0$. To prove this claim, we write
	\begin{equation}\label{eq:grad}
	\nabla E(u_k, v_k) = \nabla_{\mathcal{N}^{G}} E(u_k, v_k) + s_k \nabla f(u_k) + t_k \nabla h(u_k),
	\end{equation}
with $s_k, t_k \in \mathbb{R}$. Taking the scalar product of this identity with $(u_k,0)$ and $(0,v_k)$, we see that $s_k$ and $t_k$ solve the system
	\begin{equation}\label{eq:system_grad}
	\begin{cases}
	o(1) = s_k a_{11}^{(k)} + t_k a_{12}^{(k)}, \\
	o(1) = s_k a_{21}^{(k)} + t_k a_{22}^{(k)},
	\end{cases}
	\end{equation}
where $o(1)\to 0$ as $k\to\infty$,
	\begin{align*}
	a_{11}^{(k)} &:= (2-p)\mu_1 \int_{\Omega} |x_2|^{\gamma}|u_k|^p + \lambda \alpha(2- \alpha) \int_{\Omega} |x_2|^{\gamma} |u_k|^{\alpha}|v_k|^{\beta}, \\
	a_{12}^{(k)} &:= - \lambda \alpha \beta \int_{\Omega} |x_2|^{\gamma} |u_k|^{\alpha}|v_k|^{\beta} =: a_{21}^{(k)}, \\
	a_{22}^{(k)} &:= (2-p)\mu_2 \int_{\Omega} |x_2|^{\gamma}|v_k|^p + \lambda \beta(2- \beta) \int_{\Omega} |x_2|^{\gamma} |u_k|^{\alpha}|v_k|^{\beta}.
	\end{align*}
After passing to a subsequence, we have that $\int_{\Omega} |x_2|^{\gamma} |u_k|^{\alpha}|v_k|^{\beta}\to b\in[0,\infty)$. If $b=0$, Proposition \ref{prop:nehari}$(a)$ implies that
$$\det(a_{ij}^{(k)})\geq \frac{1}{2}(2-p)^2 c_0^2 \qquad\text{for }k\text{ large enough}.$$
If $b\neq 0$, statement \eqref{eq:det} yields 
$$\det(a_{ij}^{(k)})\geq \frac{c_2}{2}b>0 \qquad\text{for }k\text{ large enough}.$$
Hence, after passing to a subsequence, we have that $s_k\to 0$ and $t_k\to 0$, and from \eqref{eq:grad} we get that $\nabla E(u_k,v_k)\to 0$, as claimed.

By Theorem \ref{thm:in}, the embedding $\mathscr{D}^G \hookrightarrow L^p(\Omega;|x_2|^\gamma)\times L^p(\Omega;|x_2|^\gamma)$ is compact. Using this fact, it is now standard to show that $((u_k,v_k))$ contains a subsequence which converges strongly in $\mathscr{D}^G$.
\end{proof}

Let $Z$ be a symmetric subset of $\mathcal{N}^G$, i.e.,  $(-u,-v)\in \mathcal{N}^G$ iff $(u,v)\in \mathcal{N}^G$. If $Z\neq\emptyset$, the genus of $Z$ is the smallest integer $j \geq 1$ such that there exists an odd continuous function $Z \to \mathbb{S}^{j-1}$ into the unit sphere $\mathbb{S}^{j-1}$ in $\mathbb{R}^{j}$. We denote it by $\mathrm{genus}(Z)$. If no such $j$ exists, we define $\mathrm{genus}(Z) := \infty$. We set $\mathrm{genus}(\emptyset):=0$.

\begin{lemma} \label{lem:genus}
For all $j\geq 1$,
$$\Sigma_j := \{Z\subset \mathcal{N}^G : Z \text{ is symmetric and compact, and }\,\mathrm{genus}(Z)\geq j\}\neq\emptyset.$$
\end{lemma}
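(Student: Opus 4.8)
The plan is to exhibit, for each $j\geq 1$, a single symmetric compact subset $Z\subset\mathcal{N}^G$ with $\mathrm{genus}(Z)\geq j$, which already shows $\Sigma_j\neq\emptyset$. I would obtain $Z$ as the image of an odd continuous map $g:\mathbb{S}^{j-1}\to\mathcal{N}^G$ and then invoke two standard properties of the genus: if $X,Y$ are symmetric sets admitting an odd continuous map $X\to Y$, then $\mathrm{genus}(X)\leq\mathrm{genus}(Y)$, and $\mathrm{genus}(\mathbb{S}^{j-1})=j$ by the Borsuk--Ulam theorem. Setting $Z:=g(\mathbb{S}^{j-1})$, compactness and symmetry of $Z$ are immediate, and these two facts give $\mathrm{genus}(Z)\geq\mathrm{genus}(\mathbb{S}^{j-1})=j$.

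The key device, exactly as in the proof of Proposition \ref{prop:nehari}$(b)$, is to use functions with disjoint supports, so that the coupling integral $\int_\Omega|x_2|^\gamma|u|^\alpha|v|^\beta$ vanishes and the two Nehari equations $f=h=0$ decouple. Concretely, I would choose two disjoint nonempty $G$-invariant open subsets $A,B$ of $\Omega$ and, inside $A$, a $j$-dimensional subspace $V$ of $G$-invariant functions in $\mathcal{C}_c^\infty(\Omega)$ supported in $A$ (for instance $G$-averages of bumps with mutually disjoint supports, which are automatically linearly independent); this is possible since the $G$-invariant functions supported in a given $G$-invariant open set form an infinite-dimensional space. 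I then fix a linear injection $T:V\to D_0^{1,2}(\Omega)^G$ whose image consists of functions supported in $B$, and let $S:=\{u\in V:\|u\|=1\}\cong\mathbb{S}^{j-1}$.

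For $u\in S$ the functions $u$ and $Tu$ are nonzero with disjoint supports, so the coupling terms in $f(su,tTu)$ and $h(su,tTu)$ vanish and the Nehari conditions reduce to $s^2\|u\|^2=\mu_1 s^p|u|_{p,\gamma}^p$ and $t^2\|Tu\|^2=\mu_2 t^p|Tu|_{p,\gamma}^p$. These have the unique positive solutions
$$s(u)=\left(\frac{\|u\|^2}{\mu_1|u|_{p,\gamma}^p}\right)^{1/(p-2)},\qquad t(u)=\left(\frac{\|Tu\|^2}{\mu_2|Tu|_{p,\gamma}^p}\right)^{1/(p-2)},$$
which depend continuously on $u\in S$; here $(F_1)$ guarantees that $|\cdot|_{p,\gamma}$ is a norm, so the denominators are positive. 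I set $g(u):=(s(u)\,u,\,t(u)\,Tu)$, which by construction lands in $\mathcal{N}^G$, and take $Z:=g(S)$.

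It remains to verify that $g$ is odd, and this is the point that must be handled with care. Since $s$ and $t$ depend on $u$ only through the norms $\|u\|,|u|_{p,\gamma},\|Tu\|,|Tu|_{p,\gamma}$, all of which are invariant under $u\mapsto -u$, we have $s(-u)=s(u)$ and $t(-u)=t(u)$; hence $g(-u)=(s(u)(-u),t(u)(-Tu))=-g(u)$. The main obstacle is precisely this oddness: fixing the second component would break it, whereas coupling the two components through the linear isomorphism $T$ forces both to change sign simultaneously, while the disjointness of the supports of $u$ and $Tu$ is what keeps $g(S)$ inside $\mathcal{N}^G$ by annihilating the interaction integral. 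Continuity of $g$ being clear from that of $s$, $t$ and $T$, this completes the construction and shows $\Sigma_j\neq\emptyset$ for all $j\geq 1$.
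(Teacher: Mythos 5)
Your proof is correct and follows essentially the same strategy as the paper's: functions with disjoint supports to annihilate the coupling integral, separate scaling of each component to land on $\mathcal{N}^G$, an odd continuous parametrization by $\mathbb{S}^{j-1}$, and the Borsuk--Ulam theorem to bound the genus from below. The only cosmetic difference is that you parametrize by the unit sphere of a $j$-dimensional subspace paired with a linear injection $T$, whereas the paper uses $2j$ functions with mutually disjoint supports and the boundary of the cross-polytope; both yield the same odd map up to an odd homeomorphism of the parameter sphere.
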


\begin{proof}
If $u,v \in D_0^{1,2}(\Omega)^G$, $u,v \neq 0$, we denote by $s_u, t_v$ the unique positive numbers such that $\|s_u u\|^2=\mu_1|s_u u|_{p,\gamma}^p$ and $\|t_v v\|^2=\mu_2|t_v v|_{p,\gamma}^p$. Note that $(s_u u, t_v v) \in \mathcal{N}^G$ if $uv = 0$. We write
$$\rho(u,v) := (s_u u, t_v v).$$

Given $j\geq 1$, we choose $G$-invariant functions $u_1,\ldots,u_j,v_1,\ldots,v_j$ in $\mathcal{C}_c^{\infty}(\Omega)$ such that any two of them have disjoint supports. Let $\{ e_i : 1 \leq i \leq j \}$ be the canonical basis of $\mathbb{R}^j$, and $Q$ be the convex hull of $\{\pm e_i : 1 \leq i \leq j \}$, i.e.,
$$Q := \left\{\sum_{i=1}^{j} r_i \hat{e}_i : \hat{e}_i \in \{\pm e_i\}, \, r_i \in [0,1], \, \sum_{i=1}^{j} r_i = 1 \right\},$$
which is homeomorphic to the unit sphere $\mathbb{S}^{j-1}$ in $\mathbb{R}^{j}$ by an odd homeomorphism.

We define $\sigma: Q \to\mathcal{N}^{\Gamma}$ by setting $\sigma(e_i) := (u_i,v_i)$, $\sigma(-e_i):= (-u_i,-v_i)$, and
$$\sigma\left(\sum_{i=1}^{j} r_i \hat{e}_i \right) := \rho \left(\sum_{i=1}^{j} r_i \sigma(\hat{e}_i) \right).$$
This is a well defined, continuous, odd map. Hence, $Z := \sigma(Q)$ is a symmetric compact subset of $\mathcal{N}^G$. If $\tau: Z \to\mathbb{S}^{k-1}$ is an odd continuous map, the composition $\tau \circ \sigma$ yields an odd continuous map $\mathbb{S}^{j-1} \to \mathbb{S}^{k-1}$, which, by the Borsuk-Ulam theorem, forces $k \geq j$. This shows that $\mathrm{genus}(Z) \geq j$. Thus, $Z\in\Sigma_j$. 
\end{proof}

\begin{proof}[Proof of Theorem \ref{thm:mainthm} for $\lambda<0$]
By Proposition \ref{prop:nehari} and Lemma \ref{lem:ps}, $\mathcal{N}^G$ is a closed symmetric $\mathcal{C}^1$-submanifold of $\mathscr{D}^G$ that does not contain the origin, and $E$ is an even $\mathcal{C}^1$-functional, which is bounded below on $\mathcal{N}^G$ by a positive constant and satisfies $(PS)_c$ for every $c\in \mathbb{R}$. Since, by Lemma \ref{lem:genus}, $\Sigma_j\neq\emptyset$ for every $j \geq 1$, Szulkin's multiplicity result \cite[Corollary 4.1]{sz} implies that $E$ attains its minimum and has infinitely many critical points on $\mathcal{N}^G$. As $E(u,v) = E(|u|,|v|)$, the minimum can be chosen to be positive.
\end{proof}

To prove Theorem \ref{thm:separation} we could follow the argument of \cite[Proposition 5.1]{cp}. A simpler argument is given next. To highlight the role played by $\lambda$ we write $E_{\lambda}, f_{\lambda}, h_{\lambda}, \mathcal{N}^G_{\lambda}$ instead of $E, f, h,  \mathcal{N}^G$, and set
$$c_{\lambda}^G:= \inf_{(u,v) \in \mathcal{N}^G_\lambda} E_{\lambda}(u,v).$$

The nontrivial $G$-invariant solutions to the problem \eqref{eq:signchanging} are the critical points of the restriction of the functional $J : D_0^{1,2}(\Omega)^G \to \mathbb{R}$ to the Nehari manifold $\mathcal{M}^G$, defined as
\begin{align} 
J(w) &:= \frac{1}{2} \int_{\Omega} |\nabla w|^2 - \frac{1}{p} \int_{\Omega} |x_2|^\gamma(\mu_1 |w^+|^p + \mu_2 |w^-|^p), \label{eq:J} \\
\mathcal{M}^G &:= \{ w \in D_0^{1,2}(\Omega)^G: w \neq 0, \int_{\Omega} |\nabla w|^2 = \int_{\Omega} |x_2|^\gamma(\mu_1 |w^+|^p + \mu_2 |w^-|^p) \}. \nonumber
\end{align} 
The sign-changing solutions lie on the set
$$\mathcal{E}^G:= \{ w \in D_0^{1,2}(\Omega)^G: w^+, w^- \in \mathcal{M}^G\}.$$
We define
$$c^G_{\infty}:= \inf_{w \in \mathcal{E}^G} J(w).$$
Note that, if $w \in \mathcal{E}^G$, then, as $w^+ w^- = 0$, we have that $J(w) = E(w^+, w^-)$ and $(w^+,w^-) \in \mathcal{N}^G_{\lambda}$. Therefore, $c_{\lambda}^G \leq c_{\infty}^G$ for every $\lambda<0$.

\begin{proof}[Proof of Theorem \ref{thm:separation}]
Let $\lambda_k\to -\infty$ and $(u_k,v_k)\in\mathcal{N}^G_{\lambda_k}$ satisfy $E(u_k,v_k)=c_{\lambda_k}^G$ and $u_k,v_k\geq 0$. Then,
$$\frac{p-2}{2p} \int_{\Omega} \left(|\nabla u_k|^2 + |\nabla v_k|^2\right) = c_{\lambda_k}^G \leq c_{\infty}^G.$$
So, after passing to a subsequence,
\begin{align*}
u_k \rightharpoonup u_{\infty}, \qquad v_k \rightharpoonup v_{\infty}, & \qquad\text{weakly in } D_{0}^{1,2}(\Omega)^G, \\
u_k \to u_{\infty}, \qquad v_k \to v_{\infty},  &\qquad\text{strongly in } L^{p}(\Omega;|x_2|^\gamma), \\
u_k \to u_{\infty}, \qquad v_k \to v_{\infty}, & \qquad\text{a.e. in } \Omega.
\end{align*}
Hence, $u_{\infty}, v_{\infty} \geq 0$. Moreover, as $f_{\lambda_k}(u_k,v_k)+h_{\lambda_k}(u_k,v_k)=0$, we have that
$$0 \leq \int_{\Omega}|x_2|^\gamma|u_k|^{\alpha}|v_k|^{\beta} \leq \frac{1}{p(-\lambda_k)} \int_{\Omega}|x_2|^\gamma \left(\mu_1|u_k|^p + \mu_2|v_k|^p \right) \leq \frac{C_0}{(-\lambda_k)},$$
and from Fatou's lemma we obtain 
$$0 \leq \int_{\Omega} |x_2|^\gamma|u_{\infty}|^{\alpha}|v_{\infty}|^{\beta} \leq \lim_{k \to \infty} \int_{\Omega}|x_2|^\gamma|u_k|^{\alpha} |v_k|^{\beta} = 0.$$
Hence, $u_{\infty} v_{\infty} = 0$ a.e. in $\Omega$. On the other hand, by Proposition \ref{prop:nehari},
$$0<c_0 \leq \|u_k\|^2 \leq \mu_1|u_k|_{p,\gamma}^p \quad \text{and} \quad 0<c_0 \leq \|v_k\|^2\leq \mu_2|v_k|_{p,\gamma}^p.$$
So, passing to the limit, we obtain that $u_\infty\neq 0$ and $v_\infty\neq 0$. Moreover,
\begin{equation} \label{eq:comparison}
\|u_\infty\|^2 \leq \mu_1|u_\infty|_{p,\gamma}^p \quad \text{and} \quad \|v_\infty\|^2\leq \mu_2|v_\infty|_{p,\gamma}^p.
\end{equation}
Let $s,t$ be the unique positive numbers such that $\|su_\infty\|^2 = \mu_1|su_\infty|_{p,\gamma}^p$ and $\|tv_\infty\|^2 = \mu_2|tv_\infty|_{p,\gamma}^p$. Then, $su_\infty - tv_\infty \in \mathcal{E}^G$. The inequalities \eqref{eq:comparison} imply that $s,t\in (0,1]$. Therefore,
\begin{align*}
c_\infty^G &\leq \frac{p-2}{2p}\left(\|su_\infty\|^2+\|tv_\infty\|^2\right) \leq \frac{p-2}{2p}\left(\|u_\infty\|^2+\|v_\infty\|^2\right) \\
&\leq \frac{p-2}{2p}\lim_{k\to\infty}\left(\|u_k\|^2+\|v_k\|^2\right)=\lim_{k\to\infty} c_{\lambda_k}^G \leq c_{\infty}^G.
\end{align*}
It follows that $u_k \to u_{\infty}$ and $v_k \to v_{\infty}$ strongly in $D_{0}^{1,2}(\Omega)^G$, $s=t=1$, $u_\infty - v_\infty \in \mathcal{E}^G$ and $J(u_\infty - v_\infty)=c_{\infty}^G$. Arguing as in \cite[Lemma 2.6]{ccn} we conclude that $u_\infty - v_\infty$ is a least energy $G$-invariant sign-changing solution to the problem \eqref{eq:signchanging}.
\end{proof}

\section{The equation and the cooperative system} \label{sec:cooperative}

First, we prove Theorem \ref{thm:equation}.

\begin{proof}[Proof of Theorem \ref{thm:equation}]
The nontrivial $G$-invariant solutions to the problem \eqref{eq:Henon} are the critical points of the functional $J$ restricted to the Nehari manifold $\mathcal{M}^G$, as defined in \eqref{eq:J} with $\mu_1=\mu_2=1$. $J$ is an even $\mathcal{C}^2$-functional, which is bounded below on $\mathcal{M}^G$ by a positive constant. Since $p \in (2,2^*_{N-d})$ and $\gamma > \max\{\gamma_p,0\}$ if $\Omega_0\neq\emptyset$, Theorem \ref{thm:in} implies that $J$ it satisfies $(PS)_c$ for every $c\in \mathbb{R}$. Standard variational methods yield a positive minimizer for $J$ on $\mathcal{M}^G$, and one can easily adapt \cite[Theorem 3.7]{cpa0} to show that $J$ has infinitely many sign-changing $G$-invariant critical points, as claimed.
\end{proof}

The next lemma yields Theorem \ref{thm:mainthm} for cooperative systems.

\begin{lemma}
Let $w$ be a nontrivial solution to the problem \eqref{eq:Henon}. Then, there exist $s,t>0$ such that $(sw,tw)$ is a solution to the system \eqref{eq:system} if and only if there exists $r>0$ such that
\begin{equation} \label{eq:coop}
\mu_1 r^{p-2} + \lambda\alpha r^{\alpha -2}-\lambda\beta r^\alpha -\mu_2=0 \qquad\text{and}\qquad \mu_2+\lambda\beta r^\alpha>0.
\end{equation}
\end{lemma}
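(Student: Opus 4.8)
The plan is to substitute the ansatz $(u,v)=(sw,tw)$ with $s,t>0$ directly into the system \eqref{eq:system} and reduce the two partial differential equations to a pair of algebraic identities. Since $w$ solves \eqref{eq:Henon}, one has $-\Delta(sw)=s\,|x_2|^\gamma|w|^{p-2}w$ in the weak sense, and, using $s,t>0$ together with $\alpha+\beta=p$, all the nonlinear terms collapse onto the common factor $|x_2|^\gamma|w|^{p-2}w$. I would first record that the first equation of \eqref{eq:system} holds (weakly, tested against $\vartheta\in\mathcal{C}_c^\infty(\Omega)$) iff
$$s\,|x_2|^\gamma|w|^{p-2}w=\left(\mu_1 s^{p-1}+\lambda\alpha\, s^{\alpha-1}t^{\beta}\right)|x_2|^\gamma|w|^{p-2}w,$$
and the second iff
$$t\,|x_2|^\gamma|w|^{p-2}w=\left(\mu_2 t^{p-1}+\lambda\beta\, s^{\alpha}t^{\beta-1}\right)|x_2|^\gamma|w|^{p-2}w.$$

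The next step is to cancel the common factor. Because $w$ is nontrivial and, by $(F_1)$, the weight $|x_2|^\gamma$ vanishes only on the measure-zero set $\Omega\cap F$ when $\gamma>0$ (and equals $1$ when $\gamma=0$), the function $|x_2|^\gamma|w|^{p-2}w$ is not identically zero; hence each displayed identity forces its scalar coefficient to vanish. Dividing by $s$ and $t$ respectively, this yields the system
$$\mu_1 s^{p-2}+\lambda\alpha\, s^{\alpha-2}t^{\beta}=1,\qquad \mu_2 t^{p-2}+\lambda\beta\, s^{\alpha}t^{\beta-2}=1.$$

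To pass to the single condition \eqref{eq:coop}, I would introduce $r:=s/t>0$ and set $s=rt$. Using $\alpha+\beta=p$, both identities factor through $t^{p-2}$, namely $t^{p-2}(\mu_1 r^{p-2}+\lambda\alpha r^{\alpha-2})=1$ and $t^{p-2}(\mu_2+\lambda\beta r^{\alpha})=1$. Equating the two left-hand sides eliminates $t$ and produces exactly $\mu_1 r^{p-2}+\lambda\alpha r^{\alpha-2}-\lambda\beta r^\alpha-\mu_2=0$, while the second factored identity forces $\mu_2+\lambda\beta r^\alpha=t^{2-p}>0$, since $t>0$ and $p>2$; this gives the forward implication. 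For the converse, given $r>0$ satisfying \eqref{eq:coop}, I would define $t>0$ by $t^{p-2}:=(\mu_2+\lambda\beta r^\alpha)^{-1}$ (well defined precisely because of the strict positivity condition) and $s:=rt$, then read the two factored identities backwards to recover the algebraic system, and hence a genuine solution $(sw,tw)$.

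This argument is essentially a bookkeeping computation, so I do not expect a serious obstacle. The only point requiring care is the cancellation step: one must verify that $|x_2|^\gamma|w|^{p-2}w\not\equiv0$, which is where hypothesis $(F_1)$ enters to guarantee $|x_2|^\gamma>0$ almost everywhere when $\gamma>0$. A secondary point is simply to keep the positivity constraints visible throughout, since the existence of $t>0$ hinges on both $p>2$ and $\mu_2+\lambda\beta r^\alpha>0$, which explains why this strict inequality must appear alongside the polynomial equation in \eqref{eq:coop}.
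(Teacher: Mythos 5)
Your proof is correct and is exactly the direct substitution argument that the paper delegates to \cite[Lemma 4.1]{cf}: plug in the synchronized ansatz, cancel the common factor $|x_2|^\gamma|w|^{p-2}w$ (nonzero a.e.\ by $(F_1)$ and $w\not\equiv 0$), and reduce to the algebraic system in $s,t$, which via $r=s/t$ is equivalent to \eqref{eq:coop}. No gaps; the handling of the positivity condition as $\mu_2+\lambda\beta r^\alpha=t^{2-p}>0$ is the right way to see why the strict inequality belongs in \eqref{eq:coop}.
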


\begin{proof}
The proof is completely analogous to that of \cite[Lemma 4.1]{cf}.
\end{proof}

\begin{proof}[Proof of Theorem \ref{thm:mainthm} for $\lambda>0$]
The inequality in \eqref{eq:coop} is automatically satisfied if $\lambda>0$, so \eqref{eq:coop} reduces to assumption $(A)$ and Theorem \ref{thm:mainthm} follows from Theorem \ref{thm:equation}.
\end{proof}

\medskip

\begin{flushleft}
\textbf{Omar Cabrera} / \textbf{Mónica Clapp}\\
Instituto de Matemáticas\\
Universidad Nacional Autónoma de México\\
Circuito Exterior, Ciudad Universitaria\\
04510 Coyoacán, CDMX\\
Mexico\\
\texttt{omar.cabrera@im.unam.mx} / \texttt{monica.clapp@im.unam.mx} 
\end{flushleft}

\end{document}